\newcommand{\pr}[1][]{\mathbb{P}}
\newtheorem{thm}{Theorem}[section]
\newtheorem{fact}[thm]{Fact}
\newtheorem{lemma}[thm]{Lemma}
\newcommand{\remove}[1] {}
\author{Christopher Brice\thanks{Columbia University, NYC, NY} \and Erin Meger \thanks{School of Computing, Queen's University, Kingston, ON (erin.meger@queensu.ca)}
\and Nhat-Dinh Nguyen\thanks{CUNY, The City College of New York, NYC, NY} \and Allen Rakhamimov\thanks{The Cooper Union for the
Advancement of Science and Art, NYC, NY} \and
Abigail Raz\thanks{Department of Mathematics, The Cooper Union for the
Advancement of Science and Art, NYC, NY
  (abigail.raz@cooper.edu) - Corresponding Author}}
\title{Zero Forcing on Iterated Graph Models}
\begin{document}
\maketitle

\begin{abstract}
Modeling how information travels throughout a network has vast applications across social sciences, cybersecurity, and graph-based neural networks. In this paper, we consider the zero forcing model for information diffusion on iterative deterministic complex network models. In particular, we continue the exploration of the Iterative Local Transitive (ILT) model and the Iterative Local Anti-Transitive (ILAT) model, both introduced by Bonato et. al. in 2009 and 2017, respectively. These models use ideas from Structural Balance Theory to generate edges through a notion of cloning where ``the friend of my friend is my friend'' and anticloning where ``the enemy of my enemy is my friend.'' 

Zero forcing, introduced independently by Burgarth and Giovanetti and a special working group at AIM in 2007 and 2008, begins with some set of forced vertices, the remaining are unforced. If a forced vertex has a single unforced neighbor, that neighbor becomes forced. The minimum number of vertices in a starting set required to guarantee all vertices eventually become forced is the zero forcing number of the graph. The maximum number of vertices in a starting set such that the graph cannot become fully forced is the failed zero forcing number of a graph. In this paper we provide bounds for both the zero forcing and failed zero forcing number of graphs created using both the ILT and ILAT models. In particular, we show that in the case of ILAT graphs the failed zero forcing number can only take on one of four values, dependent on the number of vertices of the graph. Finally, we briefly consider another information diffusion model, graph burning, on a more general iterated graph model. 
\end{abstract}

\section{Introduction} 

Models of complex networks are used to predict
the evolution of structure within real-world networks, and rely on observed properties and phenomenon to influence generation algorithms including internet traffic, biological interactions, and social networks \cite{aclpa,barabasi2012luck, barabasiPA, chung2003duplication, estrada2005complex}. 
In \cite{IIM} the second and fifth author introduced a new, Iterative Independent Model (IIM), generalizing previously defined models in \cite{ilm, ilt, ilat}. These models use a notion of cloning, inspired by social networks, where ``the friend of my friend is my friend'' and anticloning where ``the enemy of my enemy is my friend'' \cite{ilm, easley2010networks}. Specifically, given an initial graph $G$ and vertex $v \in V(G)$ we say a new vertex $v'$ is $v$'s clone if the neighborhood of $v'$ is exactly $v$ along with the neighborhood of $v$ and $v$'s anticlone if the neighborhood of $v'$ contains all vertices non-adjacent to $v$ (except $v$ itself). 
In the Iterative Independent Model (IIM) graphs are initiated with some starting graph $G = G_0$, and at each time step $t \geq 1$ built the graph $G_t$ from $G_{t-1}$ by either cloning or anticloning every vertex in $G_{t-1}$ independently \cite{IIM}. This model fully generalizes three previous models, including the Iterated Local Model (ILM) which itself generalizes, the Iterated Local Transitive (ILT) Model and the Iterated Local Anti-Transitive (ILAT) Model, all of which each used exclusively the cloning or anticloning procedures within each time step, respectively \cite{ilt, ilat}. 

Here we extend that line of work focused on information diffusion, specifically zero forcing and failed zero forcing, which we will formally introduce in Section \ref{prelim}. The zero forcing process has been widely studied since it was introduced independently in \cite{zf-phys} in 2007 and \cite{ZF1} in 2008, where it was given the name zero forcing. Interestingly, the motivation for zero forcing in \cite{zf-phys} was as a criterion for controllability of a quantum system (see also later work in \cite{quant2} and \cite{quantum}) while the motivation in \cite{ZF1} was to use the zero forcing number as a bound on a minimum rank of a graph. Since its introduction interest in the zero forcing number has spread including connections to spectral graph theory, quantum systems control, and social network analysis as well as interest in the parameter from a purely graph theoretic perspective. Additionally, failed zero forcing was first introduced in 2014 to study the other extreme of the zero forcing process \cite{failedzeroforcing}. When considering zero forcing as a model for information diffusion across a social network it is thus logical to investigate zero forcing on graph models which have been created to model social networks hence our focus on zero forcing and failed zero forcing on graphs created using the Iterated Local Transitive (ILT) Model and the Iterated Local Anti-Transitive (ILAT) Model. Here we establish initial results regarding zero forcing for the simpler models of ILT and ILAT graphs in hopes of laying the foundation for studying information diffusion more broadly on ILM and IIM graphs.

This paper is organized as follows. In Section \ref{prelim} we recall some basic graph notation as well as specific notation used throughout the later sections. In Section \ref{fzf-sec} we provide bounds on the failed zero forcing number of ILT and ILAT graphs. In Section \ref{zf-sec} we explore and provide bounds on the zero forcing number of ILT and ILAT graphs. Finally, in Section \ref{burn-sec} we conclude with a brief discussion of a related process, graph burning. 

\section{Preliminaries}\label{prelim}

First we recall notation and terminology used throughout the paper to avoid ambiguity. We will then formally state the definitions of the Iterated Local Transitive (ILT) and Iterated Local Anti-Transitive (ILAT) models. 

\subsection{Graph Notation}

Given a graph $G$ let $V(G)$ denote the vertex set of $G$. We use $N(v)$ to denote the neighborhood of $v \in V(G)$ and use the terms \emph{closed neighborhood of $v$} for $N[v]= \{v\} \cup N(v)$ and \emph{anti-neighborhood of $v$} for $V(G) \setminus N[v]$, `denoted $AN[v]$'. For any set $X \subseteq V(G)$ and $v \in V(G)$ we let $\deg_X(v)$ denote the number of neighbors of $v$ in $X$. 
The ILT and ILAT models are both complex networks generated by a deterministic process that grows the graph at each time step following a particular generation algorithm as defined below \cite{ilt, ilat}.

Given an initial graph $G$ and vertex $v \in V(G)$ we say a new vertex $v'$ is $v$'s clone if 
\[N(v')=N[v]
\]
and $v$'s anticlone if
\[N(v')=V(G)\setminus N[v] = AN[v].
\]

The ILT graphs are generated by initializing with some base graph $G = ILT_0(G)$. Here $ILT_1(G)$ is the graph obtained by simultaneously cloning every vertex in $G$ a single time. We then recursively define $ILT_l(G)=ILT_{1}(ILT_{l-1}(G))$. We refer to each cloning stage as a time step, and the clones added in time step $i$ are referred to as level $i$. The base graph will be referred to as level 0. Note that this process requires that each level, except level 0, is an independent set. ILAT graphs are generated in the same procedure as ILT graphs, but instead of cloning every vertex, we anticlone at every time step.
More recently in the literature these models have been generalized to the Iterated Independent Model by the second and fifth authors \cite{IIM}. In this generalization, each vertex is either cloned or anticloned independently, and in any single level could contain both clones and anticlones. In this model it was shown that the anticloning behaviors often dominate the graph structure. While Sections \ref{fzf-sec} and \ref{zf-sec} show that the zero forcing process behaves in significantly different ways for ILT and ILAT graphs in Section \ref{burn-sec} we see very clearly another instance where even just the existence of a single anticlone in the graph can fix the behavior of the graph burning process. 

Throughout this paper, we will need to refer to vertices and their predecessors and descendants with precision. If $v$ is a vertex in an ILT or ILAT graph then $v_i'$ will denote the clone/anticlone of $v$ added in level $i$ and $v^*$ will denote the pre-clone/pre-anticlone of $v$, i.e. if $v$ exists in level $j$ then $(v^*)_j'=v$. Note that our notation for the pre-clones/pre-anticlones suppresses the level as it will not always be known what level this vertex exists in. Often, we will nest these notations in order to accurately describe the descendants. For example, $(v'_i)'_j$ is the clone/anticlone of $v'_i$ in level $j$.
If $v$ is some vertex in an ILAT or ILT graph and $u$ is some vertex obtained from $v$ by successive cloning/anti-cloning we call $u$ a \emph{descendant} of $v$ and $v$ an \emph{ancestor} of $u$.

Finally, if $x$ is a descendant of $y$, and $y$ exists in level $i$, then let $c(x,y)$ denote the number of levels from level $i$, inclusive, and beyond which contain an ancestor of $x$. We will refer to $c(x,y)$ as the \emph{clone distance} between $x$ and $y$. Thus if $x$ is a clone of $y$ we would have $c(x,y)=1$.

\subsection{Zero Forcing}
The process of zero forcing on Graphs has been well studied; for a start see \cite{ZF1, zf-phys, bozeman2020multi, hogbenzeroforce, throttling}.
For a given graph $G$ the zero forcing process, first introduced independently in \cite{zf-phys} and \cite{ZF1} on $G$, initializes with the vertex set partitioned into a ``forced" set of vertices, $S$, and the remaining ``unforced" vertices\footnote{In traditional definitions of zero-forcing, you may see these nodes colored blue and white respectively. However, we choose to use the active descriptors of forced and unforced.}. At each time step, a forced vertex with exactly one unforced neighbor, $u$, will cause $u$ to become forced. We say $S$ is a zero forcing set if by iterating this process eventually every vertex becomes forced. The zero forcing number of $G$, denoted $Z(G)$, is the minimum cardinality of a zero forcing set.

Furthermore, it is also of interest to study which sets do not allow for a graph to become fully forced. More specifically, we focus on the notion of failed zero forcing which was first defined by Fetcie, Jacob, and Saavedra in \cite{failedzeroforcing}. Failed zero forcing, now considers subsets of initially forced vertices that, under the same forcing process as above, \emph{fail} to force every vertex in the graph. Here we call such sets failed zero forcing sets and let $FZ(G)$ denote the failed zero forcing number, the size of a \emph{largest} failed zero forcing set. This is the parameter we begin with in the following section.

\section{Failed Zero Forcing on ILT and ILAT Graphs}\label{fzf-sec}

Recall that the failed zero forcing number of a graph, is the largest size of a set that does not force the entire graph. We first address the case of ILAT graphs, which is arguably the more interesting case. Here, the failed zero forcing number is nearly independent of the base graphs. Namely, as seen in Theorem \ref{FZF-ILAT}, irrespective of the base graph, the failed zero forcing number is very large.

\begin{thm}\label{FZF-ILAT}
    For any base graph $G$, $FZ(ILAT_l(G)) \geq |V(ILAT_l(G))| - 6$ where $l \geq 5$.
\end{thm}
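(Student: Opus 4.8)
The plan is to exhibit, inside $ILAT_l(G)$, a \emph{fort}: a nonempty set $F$ of vertices such that every vertex outside $F$ has either zero or at least two neighbours in $F$. If $F$ is a fort, then no vertex of $V(ILAT_l(G)) \setminus F$ can ever force a vertex of $F$, since a forcing vertex must have exactly one unforced neighbour. Thus, starting from the initial forced set $V(ILAT_l(G)) \setminus F$, the process stalls immediately and no vertex of $F$ is ever forced. Hence $V(ILAT_l(G)) \setminus F$ is a failed zero forcing set and $FZ(ILAT_l(G)) \ge |V(ILAT_l(G))| - |F|$. It therefore suffices to produce a fort of size $6$, which is exactly what the hypothesis $l \ge 5$ will make available.

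I would fix any base vertex and form the \emph{anticlone chain} $v_0, v_1, \dots, v_5$, where $v_0 \in V(G)$ and $v_k = (v_{k-1})'_k$ is the anticlone of $v_{k-1}$ created at level $k$; this chain lives in $ILAT_l(G)$ precisely because $l \ge 5$. Set $F = \{v_0, \dots, v_5\}$. The engine of the argument is a toggling identity that follows immediately from the anticlone rule $N((v_{k-1})'_k) = AN[v_{k-1}]$: for every vertex $w$ present before level $k$ with $w \ne v_{k-1}$,
\[ w \sim v_k \iff w \notin N[v_{k-1}] \iff w \not\sim v_{k-1}. \]
I would use this in two ways. Applied with $w = x$, it shows that for a vertex $x$ born at level $m$ the adjacencies $x \sim v_m, x \sim v_{m+1}, \dots$ strictly alternate, anchored by $x \not\sim v_m$ (two vertices born at the same level $\ge 1$ are non-adjacent). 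Applied with $w = v_j$ to the anticlone step that creates $x$, it shows that passing from a vertex to its anticlone \emph{simultaneously} complements the adjacencies to $v_j$ and to $v_{j+1}$.

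The key claim is that every $x \notin F$ satisfies $|N(x) \cap F| \ne 1$; in fact $|N(x) \cap F| \in \{2,3,4\}$. The clean case is when $x$ is not a descendant of any $v_j$: for each $j \ge 1$, combining the two uses of the toggling identity shows that such an $x$ is adjacent to \emph{exactly one} of the consecutive pair $\{v_j, v_{j+1}\}$ (the base case being a vertex still present at level $j$, and the general case following by complementing both coordinates as one climbs the ancestor line of $x$). Consequently the indicator sequence $([x \sim v_i])_{i=1}^{5}$ is strictly alternating, hence contains at least two ones, so $|N(x) \cap F| \ge 2$ regardless of the single extra bit $[x \sim v_0]$ coming from the (uncontrolled) base graph. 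What remains, and what I expect to be the main obstacle, is the finite collection of vertices $x \notin F$ that \emph{are} descendants of some chain member $v_j$, that is, whose ancestor line follows the chain for a while and then branches off at a later level. Here the consecutive-pair identity is only available for pairs lying above the branch point, so the alternation is guaranteed only on a tail of the sequence; for these vertices I would anchor the computation at the branch level, use $N((v_{k-1})'_k) = AN[v_{k-1}]$ to read off the adjacencies to $v_0, \dots, v_5$ explicitly through the parity rule $v_a \sim v_b \iff |a-b| \text{ is even and positive}$, and verify directly that the count again lands in $\{2,3\}$. Assembling the two cases gives $|N(x) \cap F| \ne 1$ for all $x \notin F$, so $F$ is a fort of size $6$ and the theorem follows.
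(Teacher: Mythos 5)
Your reduction to exhibiting a fort is sound, and it is the same strategy the paper uses implicitly: the paper also produces a six-vertex set $U$ such that every vertex outside $U$ has at least two neighbours in $U$ (together with the companion condition of at least two \emph{non}-neighbours in $U$, which is what lets the property propagate past level $5$). The gap is in the case you call clean. The claim that every non-descendant $x$ is adjacent to exactly one of each consecutive pair $\{v_j,v_{j+1}\}$ --- equivalently, that the indicator sequence $([x\sim v_i])_{i=1}^{5}$ strictly alternates --- is false. Your pair-complementation step requires that \emph{both} $v_j$ and $v_{j+1}$ exist strictly before the level at which the relevant ancestor of $x$ is created. If some ancestor of $x$ is born in level $j+1$ itself, that ancestor's adjacency to $v_{j+1}$ is $0$ by same-level independence (it is not the complement of anything, since $v_{j+1}$ is born alongside it), while its adjacency to $v_j$ \emph{does} get complemented; from that point on the pair carries $0$ or $2$ ones, never exactly one. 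Concretely, take $G=K_2$ with vertices $v_0$ and $b$, so $b\sim v_0$. Then the adjacency indicators of $b$ to $(v_0,\dots,v_5)$ are $(1,0,1,0,1,0)$, and for $x=b'_3$ one checks $x\nsim v_2$ (because $b\sim v_2$) and $x\nsim v_3$ (same level), so $x$ is adjacent to \emph{neither} member of the pair $\{v_2,v_3\}$; its indicator sequence on $v_1,\dots,v_5$ is $(1,0,0,1,0)$, which does not alternate.

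In this example the total count still happens to be $2$, and your set $F$ may well be a fort, but the proposed proof of the key claim collapses, and the failure is structural rather than cosmetic: every vertex anticloned in a level $j+1\le 5$ whose parent was adjacent to $v_j$ violates the alternation. Repairing it forces you to carry a finer, two-sided invariant --- at least two neighbours \emph{and} at least two non-neighbours in $F$ --- level by level, with separate treatment of ancestors born in levels $1$ through $5$. Note that you need the two-sided version in any case for levels beyond $5$: there a non-descendant has all six bits complemented relative to its parent, so ``at least two ones'' alone does not propagate (your write-up relies on this propagation but never argues it). Once you do all that, you are performing a case analysis of essentially the same size as the paper's, plus your still-unfinished analysis of vertices branching off the chain. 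The paper makes the casework manageable precisely by rooting its six vertices deeper: two parallel anticlone chains occupying only levels $3,4,5$ (successive anticlones of $u'_1$ and of $(u'_1)'_2$), so that every vertex of levels $0$ through $2$ interacts with the entire set through just two controlled adjacency bits, and every achievable pattern automatically has at least two ones and two zeros.
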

\begin{proof}
    Let $G$ be any graph and let $H = ILAT_5(G)$. We will prove that there is a set $U \subseteq V(H)$ of size 6 such that for every vertex $a \in V(H)$ we have $|U \cap N[a]\|$ and $|U \cap AN[a]|$ are both at least two. That is, every vertex in $H$ has at least two vertices in it's closed neighborhood and two non-neighbours in $U$. Thus for every $l \ge 5$ and every $v\in V(ILAT_l(G))$ we know that $N[v]$ and $AN[v]$ must each still contain at least two vertices in $U$. Therefore $V(ILAT_l(G))\setminus U$ is a failed zero forcing set of $ILAT_l(G)$ as desired. 
    
    To form $U$ let $u$ be an arbitrary vertex in $G$. Now in $H=ILAT_5(G)$ let $v=u'_1$ (recall this notation means $v$ is the anti-clone of $u$ in level $1$) and let $w = v'_2$ (the anticlone of v in level 2). We will select $U = \{v'_3, w'_3, (v'_3)'_4, (w'_3)'_4, ((v'_3)'_4)'_5, ((w'_3)'_4)'_5\}$. Note these are successive anticlones of $v$ and $w$ in levels 3, 4, and 5. 
     Let $a$ be an arbitrary vertex. We will prove that $|U \cap N[a]\|$ and $|U \cap AN[a]|$ are both at least two in cases based on the level $a$ is in. 
    
    First assume $a$ is in level 0. Note that all vertices in level 0 which are in $N_G[u]$  must be non-adjacent to $v$ and adjacent to $w$. Thus such vertices are adjacent to each vertex in $X=\{v'_3, (w'_3)'_4, ((v'_3)'_4)'_5\}$, and non-adjacent to the vertices in $U \setminus X$. Conversely, the vertices in $AN_G(u)$ are all adjacent to the vertices in $U \setminus X$ and non-adjacent to the vertices in $X$. Similarly all vertices in level $1$ except $v$ are adjacent to the vertices in $X$ and non-adjacent to the vertices in $U \setminus X$, while $v$ itself is adjacent to the vertices in $U \setminus X$ and non-adjacent to the vertices in $X$.

    In the case of vertices $a \neq w$ in level 2 we know $a$ is nonadjacent to $w$ and thus adjacent to $w'_3$ and $((w'_3)'_4)'_5$ and non-adjacent to $(w'_3)'_4$. Furthermore, if $a \nsim v$ then $a \sim v'_3$ and $a \nsim (v'_3)'_4$ and if $a \sim v$ then $a \sim (v'_3)'_4$ and $a \nsim v'_3$. On the other hand $w$ itself is non-adjacent to $v$ and thus $w$ is adjacent to $v'_3$ and $((v'_3)'_4)'_5$ and nonadjacent to $w'_3$ and $((w'_3)'_4)'_5$.

    Note that every vertex in levels $3, 4$, and $5$ of $H$ which are not in $U$ are non-adjacent to the two vertices in $U$ with which they share a level. Furthermore, $v'_3\nsim w'_3 $ and each are non-adjacent to their anticlones in level 4. Similarly, $(v'_3)'_4\nsim (w'_3)'_4 $ and each are nonadjacent to their anticlones in level 5. Finally, $((v'_3)'_4)'_5\nsim  ((w'_3)'_4)'_5$ and each are non-adjacent to their pre-anticlones in level 4. 

    Thus it remains to show that each vertex in levels $3, 4,$ and $5$ have at least two vertices in $U$ in their closed neighborhoods. As above every vertex in level $3$ which is not in $U$ is adjacent to $(v'_3)'_4$ and $(w'_3)'_4$, and every vertex in level $4$ which is not in $U$ is adjacent to $((v'_3)'_4)'_5$ and $((w'_3)'_4)'_5$. Furthermore, $v'_3 \sim (w'_3)'_4, ((v'_3)'_4)'_5 $, and  $w'_3 \sim (v'_3)'_4, ((w'_3)'_4)'_5 $. This immediately shows that every vertex in level 3, level 4, or $U$ has at least two elements of $U$ in its closed neighborhood. 

    We are now left to only show that each vertex $a$ in level $5$ which is not in $U$ has at least two vertices in $U$ in its closed neighborhood. Note that we have already shown that each vertex $b$ in levels $0$ through 2 are non-adjacent to at least two vertices from $\{v'_3, w'_3, (v'_3)'_4, (w'_3)'_4\}$. Thus $b'_5$ must be adjacent to at least two vertices from $\{v'_3, w'_3, (v'_3)'_4, (w'_3)'_4\}$. For each $b$ in level 3 except $v'_3$ and $w'_3$ we know $b'_5 \sim v'_3, w'_3$ and for each $b$ in level 4 except $(v'_3)'_4$ and $(w'_3)'_4$ we know $b'_5 \sim (v'_3)'_4, (w'_3)'_4$. Finally, the only vertices left to consider are $(v'_3)'_5$ and $(w'_3)'_5$. Note that $(v'_3)'_5 \sim w'_3, (v'_3)'_4$ and $(w'_3)'_5 \sim v'_3, (w'_3)'_4$, proving our result. 
\end{proof}
Given Theorem \ref{FZF-ILAT} we now begin the process of classifying exactly what base graphs lead to exactly what zero forcing number of the corresponding ILAT graphs. We are able to completely categorize when $FZ(ILAT_l(G)) = |V(ILAT_l(G))| - 2$ and when $FZ(ILAT_l(G)) = |V(ILAT_l(G))| - 3$ as seen in Fact \ref{FZF-ILAT2} and Theorem \ref{FZF-ILAT3}, respectively, and give some initial thoughts on when we get $FZ(ILAT_l(G)) = |V(ILAT_l(G))| - 4$.

\begin{fact}\label{FZF-ILAT2}
To have $FZ(ILAT_l(G))=|V(ILAT_l(G))|-2$, for $l \ge 1$ one of the following must hold
        \begin{enumerate}
        \item Two vertices in our base graph have identical closed neighborhoods.
        \item The base graph, $G$, consists of an isolated vertex, $x$, and a vertex which dominates $G \setminus \{x\}$. 
        \item $G=K_1$
            
        \end{enumerate}
\end{fact}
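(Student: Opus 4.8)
The plan is to first turn the hypothesis $FZ(ILAT_l(G)) = |V(ILAT_l(G))|-2$ into a purely structural statement about $ILAT_l(G)$, and then peel the levels of the construction off one at a time. Write $n = |V(ILAT_l(G))|$ and abbreviate $G_t := ILAT_t(G)$. The first observation is the general behaviour of failed zero forcing near the top: a set of size $n-1$ fails precisely when the single omitted vertex is isolated, and a set of size $n-2$ omitting the pair $\{a,b\}$ fails precisely when no forced vertex sees exactly one of $a,b$ — equivalently when $a$ and $b$ have the same neighbours outside $\{a,b\}$. Hence, as long as there is no isolated vertex (so that forcing one of $a,b$ would cascade to force the other), a failed set of size $n-2$ exists if and only if $ILAT_l(G)$ contains a \emph{twin pair}: two vertices with $N(a)=N(b)$ (open twins, necessarily non-adjacent) or $N[a]=N[b]$ (closed twins, necessarily adjacent). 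Since $FZ = n-2$ also rules out $FZ \ge n-1$, I conclude that $FZ(ILAT_l(G)) = n-2$ forces $ILAT_l(G)$ to have \emph{no isolated vertex} and to contain a twin pair.

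The heart of the argument is then a downward induction carrying one of two invariants: ``$G_t$ has a closed-twin pair'' or ``$G_t$ has an isolated vertex together with a vertex dominating the rest.'' For the top level I would split on where the twin pair $\{a,b\}$ sits relative to level $l$, using that level $l$ is an independent set whose vertices are anticlones adjacent only to $AN_{G_{l-1}}[\cdot]$. If both lie below level $l$, or both lie in level $l$, matching their level-$l$ neighbourhoods forces $AN_{G_{l-1}}[a]=AN_{G_{l-1}}[b]$, i.e. $N_{G_{l-1}}[a]=N_{G_{l-1}}[b]$, a closed-twin pair in $G_{l-1}$. The case of one twin in level $l$ and one below either is impossible for open twins (it would force a dominating vertex, whose anticlone is isolated, contradicting the no-isolated-vertex hypothesis) or, for closed twins, a short neighbourhood computation forces $G_{l-1}$ to contain an isolated vertex $z$ and a vertex $b$ dominating $G_{l-1}\setminus\{z\}$. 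Thus a twin pair in $G_l$ yields one of the two invariants in $G_{l-1}$.

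I would then iterate two reductions. First, a closed-twin pair in $G_t$ ($t\ge 1$) is never contained in the independent level $t$, so the same split gives either a closed-twin pair in $G_{t-1}$ or the ``isolated-plus-dominating'' configuration in $G_{t-1}$. Second, the ``isolated-plus-dominating'' configuration in $G_t$ for $t\ge 1$ collapses: a dominating vertex of $G_t\setminus\{p\}$ must be adjacent to almost every vertex of the fresh independent level $t$, which an anticlone cannot be, and chasing this forces the candidate vertex to be simultaneously isolated and dominating in $G_{t-1}$ — impossible unless $|V(G_{t-1})|=1$, i.e. $G_{t-1}=K_1$, whence $t=1$ and $G=K_1$. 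Running the induction down to level $0$, the closed-twin invariant surviving to $G_0=G$ gives condition~(1); the isolated-plus-dominating invariant appearing at $G_0=G$ gives condition~(2); and any appearance of that invariant at a positive level forces $G=K_1$, condition~(3).

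The main obstacle is the bookkeeping in the mixed case where one twin lives in the top level and the other below: extracting the precise conclusion (an \emph{isolated} vertex plus an \emph{almost-dominating} vertex, rather than some weaker near-twin condition) requires carefully separating each vertex's closed neighbourhood into its level-$l$ part and its lower part and equating them term by term, and the collapse of the isolated-plus-dominating invariant rests on the subtle point that a dominating vertex cannot persist through anticloning because the newest level is an independent set. I would also need to dispose separately of the degenerate small base graphs ($|V(G)| \le 2$), where ``dominates'' and ``isolated'' overlap and the definitions in conditions~(2) and~(3) become boundary cases.
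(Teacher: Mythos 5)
Your proposal is correct, and its local computations coincide with the paper's: both arguments reduce $FZ(ILAT_l(G))=|V(ILAT_l(G))|-2$ to the existence of a twin pair in a graph with no isolated vertex, and both then split on where the pair sits relative to the newest level, finding that matching the level-$l$ parts of the neighbourhoods forces a \emph{closed}-twin pair one level down, while the mixed case forces the isolated-plus-dominating structure. Where you genuinely differ is the global bookkeeping. The paper fixes a \emph{minimal} level $l^*$ with $FZ=|V|-2$, dismisses the cases ``both twins below level $l^*$'' and ``both twins in level $l^*$'' as contradictions with minimality, and in the mixed case pins the near-dominating twin to level $0$, forces $l^*=1$, and lands directly in condition (2). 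You instead run a downward induction carrying two invariants (closed-twin pair, or isolated vertex plus a vertex dominating the rest) and prove a separate collapse lemma: the second invariant at a positive level forces $ILAT_{t-1}(G)=K_1$, hence $G=K_1$. Your version buys real robustness: the paper's dismissal of the ``both twins in earlier levels'' case is too quick, since twins in $F=ILAT_{l^*-1}(G)$ only give $FZ(F)\ge |V(F)|-2$, and minimality rules out equality but not $FZ(F)=|V(F)|-1$, which occurs exactly when $F$ has an isolated vertex. Concretely, $ILAT_1(K_3)$ has both closed twins and isolated vertices, so for $G=K_3$ one gets $l^*=2$ with the twins lying entirely in earlier levels and no contradiction with minimality; the theorem survives (via condition (1)), but only an argument like your invariant descent, which needs no minimality, actually closes this case. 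The paper's approach is shorter where it works, and its mixed-case analysis reaches condition (2) more directly than your two-step route through the collapse lemma. One small nit: your opening claim that the set omitting $\{a,b\}$ fails ``precisely'' when no vertex sees exactly one of $a,b$ is not exact as stated (failure can also occur because one of $a,b$ is isolated), but the no-isolated-vertex qualification you immediately impose is exactly what repairs this, so nothing downstream is affected.
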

\begin{proof}
    It is clear that for any graph $G$ the only way for $FZ(G)= |V(G)|-2$ is for there to exist two vertices $u, v \in V(G)$ such that $N(u)\setminus\{v\}=N(v) \setminus \{u\}$, thus defining case 1. Then $V(G)\setminus \{u,v\}$ is clearly a failed zero forcing set. Since every vertex is adjacent to either both unforced vertices or neither, there is no one to force these remaining vertices. 
    If the base graph is in case 2, then $G$ consists of an isolated vertex, $u$, and a vertex, $v$, which dominates $G \setminus \{u\}$. After the first step in $ILAT_1(G)$ we have $N[u_1']=N[v]$, and thus are in case 1.
    Finally, if the base graph is $K_1$, with $V(G) = \{v\}$, then after one time step we will enter case 2 with the isolated vertex $v$, and the vertex that dominates $G\backslash \{v\}$ being $v'$ its anti-clone, landing us in case 1. 
    Thus each of the three conditions in Fact \ref{FZF-ILAT2} are clearly sufficient and it just remains to show they are necessary.

    First note that every base graph $G$ on one or two vertices clearly satisfies one of the conditions of Fact \ref{FZF-ILAT2}, so we may assume $G$ has at least three vertices. Now assume that for some level $l \ge 1$ we have $FZ(ILAT_l(G))=|V(ILAT_l(G))|-2$. Let $l^*$ be the smallest such $l$ and let $H= ILAT_{l^*}(G)$. As earlier let $H_{l^*}$ denote the final level of $H$. Let $F$ denote the subgraph of $H$ induced by all previous levels, $V(H)\setminus H_{l^*}$. Since we assume $FZ(H)= |V(H)|-2$ we must have two vertices $u, v \in V(H)$ such that $N_H(u)\setminus\{v\}=N_H(v) \setminus \{u\}$. 
    
    Consider the case where $u, v \in V(F)$. By the minimality of $l^*$ we may assume, without loss of generality, that there is an $x \in V(F)$ such that $x \in N(u) \setminus N(v)$, a contradiction. Now consider the case where $u, v \in H_{l^*}$. If we consider their pre-anticlones, $u^*$ and $v^*$ we are thus forced to have $AN_F[u^*]=AN_F[v^*]$. However, this immediately gives that $N_F[u^*]=N_F[v^*]$ again contradicting the minimality of $l^*$. Thus we are only left to consider the case where $u \in V(F)$ and $v \in H_{l^*}$. This requires that for every $x \in V(F)\setminus \{u,v^*\}$ we have $x \sim u$ or else $x'_l \in N(u) \setminus N(v)$. Since $G$ has at least three vertices every level contains at least three vertices, and thus if $u \in H_{i}$ for $i\ge 1$ it has at least two non-neighbors in $F$, namely all of the other vertices in $H_i$. 
    
    Thus we are only left to consider the situation where $u$ is in level 0, i.e. $u \in V(G)$. However, in this case if $l^* \ge 2$ then $u$ cannot be adjacent to both a vertex in $G$ and that vertices anticlone in level $1$. Thus we must have $l^*=1$. Here we note that $u \nsim v^*$ since clearly $v^* \nsim v$ and we have already shown that $u$ must dominate $V(G)\setminus{v^*}$, which in turn requires that $v$ must also be adjacent to each vertex in $V(G)\setminus{v^*}$. Thus we are exactly in the configuration described in Case 2. 
\end{proof}
Note that Fact \ref{FZF-ILAT2} completely classifies all cases where $FZ(ILAT_l(G))= |V(ILAT_l(G))|-2$ for all $l$. 
However, perhaps surprisingly, it is impossible to have a base graph such that $FZ(ILAT_l(G))= |V(ILAT_l(G))|-3$ for all $l$. In fact, we prove an even stronger statement below.

\begin{thm}\label{FZF-ILAT3}
    For any graph $G$ and $l \geq 4$ we have $FZ(ILAT_l(G)) \neq |V(ILAT_l(G))| - 3$.
\end{thm}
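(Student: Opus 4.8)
The plan is to pass to the complementary description of failed zero forcing. Observe that $V(H)\setminus U$ (with $H=ILAT_l(G)$) fails to force $H$ exactly when $U$ contains a nonempty set on which the process \emph{stalls}, that is, a set $F$ such that no vertex outside $F$ has exactly one neighbour in $F$; this $F$ is precisely the set of vertices still unforced when the process halts. A stalled set of size $1$ is an isolated vertex, giving $FZ(H)=|V(H)|-1$, and a stalled set of size $2$ is a pair of \emph{twins}, the situation of Case~1 of Fact~\ref{FZF-ILAT2}, giving $FZ(H)=|V(H)|-2$. Hence if $FZ(H)=|V(H)|-3$ there is a stalled set $U=\{a,b,c\}$ of size exactly three, so every vertex outside $U$ has $0$, $2$, or $3$ neighbours in $U$, while $H$ has no isolated vertex and no pair of twins. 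I would assume this configuration and derive a contradiction, either by producing a vertex outside $U$ with exactly one neighbour in $U$, or by exhibiting an explicit pair of twins anywhere in $H$.

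The engine of the argument is a single adjacency identity for iterated anticlones. For any vertex $w$ and any iterated anticlone $z$ of $w$ with clone distance $c(z,w)=k$, and for every vertex $y\neq w$ whose level is at most the level of $w$, one has $z\sim y$ if and only if $w\sim y$ when $k$ is even, and $z\sim y$ if and only if $w\nsim y$ when $k$ is odd; moreover $z\sim w$ if and only if $k$ is even. This follows by induction on $k$ directly from $N(v')=AN[v]$ together with the fact that each level $\ge 1$ is independent, and it is exactly the kind of statement that is completely controllable: fixing an element of $U$ as the ``root'' $w$, the adjacencies of its anticlones to the other two elements are determined by the base relations among $a,b,c$ and the parity of $k$.

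Applying this with $w=c$ a highest-level element of $U$, any iterated anticlone $z$ of $c$ lies outside $U$ and satisfies $\deg_U(z)=[k\text{ even}]+[(a\sim c)\oplus k]+[(b\sim c)\oplus k]$, where the brackets denote $0/1$ truth values, $\oplus$ is parity addition, and $a,b$ have level at most that of $c$. Stalledness forbids $\deg_U(z)=1$. Taking $k=1$ forces $a\sim c$ and $b\sim c$ to have the same truth value; taking $k=2$ forces them not to be both false; together, whenever the level of $c$ is at most $l-2$ (so that both anticlones exist), this yields $a\sim c$ and $b\sim c$. The same test applied to a second element tied with $c$ at the top level produces two vertices in a common level at least $1$ that are adjacent, contradicting independence of that level, so the top level of $U$ is occupied by a single element. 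Iterating these tests, which is where $l\ge 4$ is used to guarantee enough higher levels in which to place the anticlone witnesses, pins $\{a,b,c\}$ down to a short list of configurations described by the levels of $a,b,c$, the base relations among them, and, when some element lies in level $0$, the adjacencies of their level-$0$ ancestors in $G$.

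The remaining task is to show that each surviving configuration is impossible: in every case one must either locate one further anticlone witness with exactly one neighbour in $U$, or check that two of the elements of $U$ (or two of their low-distance anticlones) have identical neighbourhoods and hence are twins. I expect the genuine obstacle to be the boundary configurations in which the highest element of $U$ sits in level $l$ or $l-1$, where there are too few levels above it to run the witness test, together with the level-$0$ cases, where the base graph $G$ injects adjacencies that the parity bookkeeping cannot absorb. Handling these requires arguing directly from $N(v')=AN[v]$ about the top two levels and verifying that the base-graph corrections either restore a degree-one witness or collapse two vertices into twins; this final reconciliation of the parity structure with the arbitrary base graph is the crux, and the threshold $l\ge 4$ is what makes enough room for it to go through, with $l=3$ the natural boundary.
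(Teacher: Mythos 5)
Your framework coincides with the paper's, and the parts you execute are correct: the fort/stalled-set reformulation, the observation that a stalled pair is exactly the twin configuration of Fact~\ref{FZF-ILAT2} (so a counterexample yields a stalled, twin-free triple $U$), and the parity identity for iterated anticlones are all right. Your $k=1,2$ witness tests then correctly recover the paper's dichotomy (produce a vertex with exactly one neighbour in $U$, or produce twins) in the regime where the root has at least two levels of anticlones above it --- essentially the paper's Case~1, where $U\subset\bigcup_{i=0}^{l-2}H_i$.

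The gap is that the proposal stops exactly where the real work begins. Your tests need anticlones of the root in two higher levels, so they are vacuous when the top element of $U$ lies in level $l-1$ or level $l$; and the case $U\subset H_l$ (the paper's Case~9, by far the longest) cannot be attacked by anticlones at all, since level-$l$ vertices have none. There one must pass to the pre-anticlones $a_i^*$, and then to pre-anticlones of those, recursing downward with case splits on the levels these ancestors occupy, until either a degree-one witness is produced or the recursion reaches level~0, where the arbitrary base graph forces one to exhibit twins instead (isolated-vertex and dominating-vertex configurations in $G$); note that the twins found in these cases are \emph{ancestors} of elements of $U$, not elements of $U$ or their ``low-distance anticlones'' as you suggest, since level-$l$ vertices have no anticlones. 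This descent --- Cases~2 through~9d of the paper --- contains the single place where $l\ge 4$ is actually used (Case~9d) and is precisely what you label ``the crux'' and defer. A second, smaller omission: even in the low-level regime, tests rooted at elements of $U$ do not suffice; if $U$ induces a $K_3$ in levels at most $l-2$, stalledness is fully consistent with your parity constraints, and ruling it out requires witnesses that are anticlones of vertices \emph{outside} $U$ (a vertex adjacent to exactly two elements of $U$ has an anticlone adjacent to exactly one) or, failing that, the conclusion that the elements of $U$ are themselves mutual twins. As written, then, this is a correct plan whose easy half is carried out, while the decisive case analysis that constitutes the bulk of the paper's proof is missing.
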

\begin{proof}
    As noted earlier any base graph $G$ on at most two vertices satisfies one of the conditions from Fact \ref{FZF-ILAT2}. Thus we may assume $|V(G)|\ge 3$ and $l \ge 4$. To show that any such $H=ILAT_l(G)$ cannot have $FZ(H) = |V(H)| - 3$ we will demonstrate that for any triple of vertices $a_1, a_2, a_3 \in V(H)$ we have one of the below conditions.
    \begin{enumerate}
        \item There is a vertex $x \in V(H)\setminus \{a_1,a_2,a_3\}$ such that $N(x)$ contains exactly one of $a_1,a_2, a_3$. \label{-3con1}
        \item There are vertices $u,w \in V(H)$ such that $N[u]=N[w]$.\label{-3con2}.
    \end{enumerate}

    Condition \ref{-3con1} shows that that $V(H)\setminus \{a_1,a_2,a_3\}$ is not a maximum failed zero forcing set. 
    Similarly, condition \ref{-3con2} shows that $FZ(H)=|V(H)|-2$, by excluding $u,w$. 
    
    Below we divide the proof into several cases based on which level each of the $a_i's$ appear. In each case, we identify either a vertex $x$ satisfying condition \ref{-3con1} or two vertices $u$ and $w$ having identical closed neighborhoods, satisfying condition \ref{-3con2}. As before let $H_i$ denote the vertices of $H$ in level $i$. Let $A=\{a_1,a_2,a_2\}$.

    \medskip

    \noindent {\bf Case 1:} $A \subset  \bigcup_{i=0}^{l-2}H_{i}$\\
    If one of the vertices in $A$, without loss of generality assume $a_1$, is not adjacent to the other two vertices in $A$ then we can satisfy condition \ref{-3con1} with $x=((a_1)_{l-1}')_l'$. Now assume there is a vertex in $A$ adjacent to exactly one of the other two. Without loss of generality assume $a_1 \sim a_2$ and $a_1 \nsim a_3$ then we can satisfy condition \ref{-3con1} with $(a_1)'_{l}$. Finally assume that $A$ induces a complete graph. If there exists some vertex $x \in \bigcup_{i=0}^{l-2}H_{i} $ such that $x$ is adjacent to exactly one vertex in $A$ then condition \ref{-3con1} is satisfied, and if there exists a vertex $v \in  \bigcup_{i=0}^{l-2}H_{i} $ such that $v$ is adjacent to exactly two vertices in $A$ then condition \ref{-3con1} is still satisfied by $x=v'_l$. If neither of these is the case then the vertices in $A$ all have identical closed neighborhoods satisfying condition \ref{-3con2}.\\

     \noindent {\bf Case 2:} $|A \cap  \bigcup_{i=0}^{l-2}H_{i}|=2$ and $|A \cap H_{l-1}|=1$.\medskip

Without loss of generality let $a_1 \in H_{l-1}$ and $a_2,a_3 \in \bigcup_{i=0}^{l-2}H_{i} $. If one of the vertices in $\{a_2,a_3\}$, without loss of generality assume $a_2$, is not adjacent to the other two vertices in $A$ then we can satisfy condition \ref{-3con1} with $x=((a_2)_{l-1}')_l'$. If a vertex in $A$, without loss of generality $a_2$, is adjacent to exactly one other vertex in $A$ then condition \ref{-3con1} is satisfied with $x=(a_2)'_l$. We are again left to consider the case when the vertices in $A$ induce a complete graph. As before if there is a vertex $v \in  \bigcup_{i=0}^{l-2}H_{i}\setminus A$ such that $v$ is adjacent to exactly one of $\{a_2,a_3\}$ then condition \ref{-3con1} is satisfied with $x=v'_{l-1}$. If no such $v$ exists then $a_2$ and $a_3$ must have identical closed neighborhoods satisfying condition \ref{-3con2}.\\

     \noindent {\bf Case 3:} $|A \cap \bigcup_{i=0}^{l-2}H_{i}|=1$ and $|A \cap H_{l-1}|=2$.
     \medskip

Without loss of generality let $a_1,a_2 \in H_{l-1}$ and $a_3 \in\bigcup_{i=0}^{l-2}H_{i} $. Note since $a_1$ and $a_2$ are in the same level (and not both in level 0) we know $a_1 \nsim a_2$. If at least one of $\{a_1,a_2\}$ is adjacent to $a_3$, without loss of generality assume $a_1 \sim a_3$, then condition \ref{-3con1} is satisfied with $x=(a_1)_l'$. If not, $A$ forms an independent set. Note that since $l-2 \ge 1$ we know that there is some vertex $v \in \bigcup_{i=0}^{l-2}H_{i}$ such that $v \nsim a_3$. Note that $v'_{l-1} \sim a_3$ and thus cannot be in $A$. Therefore condition \ref{-3con1} is satisfied with $x=v'_{l-1}$.\\

     \noindent {\bf Case 4:} $|A \cap \bigcup_{i=0}^{l-2}H_{i}|=2$ and $|A \cap H_{l}|=1$.
     \medskip
     
     Without loss of generality let $a_1 \in H_l$ and $a_2, a_3 \in \bigcup_{i=0}^{l-2}H_{i}$. If $a_2 \nsim a_3$ then at most one of $(a_2)_l'$ and $(a_3)_l'$ is equal to $a_1$ and thus the other satisfies condition \ref{-3con1}. If there exists some $v \in \bigcup_{i=0}^{l-2}H_{i}\setminus A$ such that $v$ is adjacent to exactly one of $\{a_2,a_3\}$ then $v'_{l-1}$ is also adjacent to exactly one of $\{a_2,a_3\}$. Therefore at most one of $v'_l$ and $(v'_{l-1})'_l$ is equal to $a_1$ and thus the other satisfies condition \ref{-3con1}. If no such $v$ exists then $N[a_2]=N[a_3]$ satisfying condition \ref{-3con2}.\\
     
       \noindent {\bf Case 5:} $|A \cap \bigcup_{i=0}^{l-2}H_{i}|=1$, $|A \cap H_{l-1}|=1$, and $|A \cap H_{l}|=1$.
       
       \medskip
       
     Without loss of generality let $a_1 \in H_l$, $a_2 \in H_{l-1}$, and $a_3 \in \bigcup_{i=0}^{l-2}H_{i}$.
    Note that since $l-2 \ge 1$ we know there are at least three vertices in $ \bigcup_{i=0}^{l-2}H_{i}\setminus A$ which are not adjacent to $a_3$. Thus there is a vertex $v \in \bigcup_{i=0}^{l-2}H_{i} \setminus A$ such that $v \nsim a_3$, $v_{l-1}'\neq a_2$, and $(v_{l-1}')'_l\neq a_1$. As vertices in the same non-zero level are non-adjacent we know that condition \ref{-3con1} is satisfied with $x=(v_{l-1}')'_l$.\\
    
         \noindent {\bf Case 6:} $|A \cap \bigcup_{i=0}^{l-2}H_{i}|=1$ and $|A \cap H_{l}|=2$.

         \medskip
         
     Without loss of generality let $a_1,a_2 \in H_l$ and $a_3 \in \bigcup_{i=0}^{l-2}H_{i}$.
    Again we know there are at least three vertices in $ \bigcup_{i=0}^{l-1}H_{i}\setminus A$ which are not adjacent to $a_3$. Thus there is a vertex $v \in  \bigcup_{i=0}^{l-1}H_{i} \setminus A$ such that $v\nsim a_3$, and $v'_l$ is neither $a_1$ nor $a_2$. Therefore, condition \ref{-3con1} is satisfied with $x=v'_l$.\\
    
    \noindent {\bf Case 7:} $|A \cap H_{l-1}|=3$.

    \medskip 
    
 Note that in this case if there is some vertex $v \in   \bigcup_{i=0}^{l-2}H_{i}$ such that $v$ is adjacent to exactly one vertex in $A$ then condition \ref{-3con1} is automatically satisfied with $x=v$. Additionally, if there is some vertex $v \in  \bigcup_{i=0}^{l-2}H_{i}$ such that $v$ is adjacent to exactly two vertices in $A$ then condition \ref{-3con1} is satisfied by $x=v'_l$. Thus we may assume every vertex in $\bigcup_{i=0}^{l-2}H_{i}$ is adjacent to all of the vertices in $A$ or none of them. In particular, the pre-anticlones of the vertices in $A$, denoted $A^* = \{a_1^*,a_2^*, a_3^*\}$, must have identical \emph{closed} neighborhoods in $H$ satisfying condition \ref{-3con2}. \\
 
\noindent {\bf Case 8:} $|A \cap H_{l-1}| \geq 1$ and $A\backslash H_{l-1} \subset H_l$ 
\medskip

Without loss of generality $a_1 \in H_{l-1}$. Since $l-1 \ge 2$ we know there are at least three other vertices in $H_{l-1}$, all of which are non-adjacent to $a_1$. Thus there is some vertex $v \in H_{l-1}\setminus A$ such that $v'_l \notin A$. Therefore condition \ref{-3con1} is satisfied by $x = v'_l$. \\

\noindent {\bf Case 9:} $|A \cap H_{l}|=3$

\medskip 

This case requires the most work as we cannot take advantage of any anticlones of $A$. Thus we must instead consider the pre-anticlones again denoted by $A^* = \{a_1^*,a_2^*, a_3^*\}$. Please note that there are no a priori guarantees on the levels of the vertices in $A^*$ and we will need to divide into further cases based on what levels these vertices are in. Note first that if any vertex $v \in  \bigcup_{i=0}^{l-1}H_{i}$ has exactly two vertices of $A^*$ in its closed neighborhood then condition \ref{-3con1} is satisfied with $x=v$. In particular, this means we may assume that $A^*$ itself forms either an independent set or induces a $K_3$.

\medskip

\indent{\bf Case 9a:} $A^* \subset  \bigcup_{i=0}^{l-2}H_{i}$

\smallskip 
First consider the case where $A^*$ forms an independent set. Thus $x= (a_1^*)'_{l-1}$ satisfies condition \ref{-3con1}. The only other case now is when $A^*$ induces a $K_3$. As noted above we are done if any vertex $v \in \bigcup_{i=0}^{l-1}H_{i}$ has exactly two vertices in $A^*$ in its closed neighborhood. However, if any vertex $v \in \bigcup_{i=0}^{l-2}H_{i}$ has exactly one vertex in $A^*$ in its closed neighborhood then $v_{l-1}'$ will have exactly two vertices of $A^*$ in its closed neighborhood. Thus we may assume that every vertex $v \in \bigcup_{i=0}^{l-1}H_{i}$ has either all of $A^*$ or none of $A^*$ in its closed neighborhood. Thus the vertices in $A^*$ have identical closed neighborhoods satisfying condition \ref{-3con2}.

\medskip

\indent{\bf Case 9b:} $|A^* \cap  \bigcup_{i=0}^{l-2}H_{i}|=2$ and $|A^* \cap H_{l-1}|=1$.

\smallskip

Without loss of generality let $a_1^* \in H_{l-1}$. Again we start by assuming $A^*$ forms an independent set. In this case let $b$ be the pre-anticlone of $a_1^*$, that is $a_1 = ((b)_{l-1}')_l'$. As we are assuming $A^*$ is an independent set we know $a^*_2,a^*_3 \in N(b)$. Thus condition \ref{-3con1} is satisfied by $x=b$. On the other hand if $A^*$ induced a $K_3$ then $a^*_2$ and $a^*_3$ must either be in different levels or both in level 0. First assume $a^*_2$ and $a^*_3$ are both in level 0. Then at most one of $((a^*_2)_{l-2}')_{l-1}'$ and $((a^*_3)_{l-2}')_{l-1}'$ is equal to $a_1^*$; as they all lie in the same non-zero level, the other is non-adjacent to $a_1^*$ thus satisfying condition \ref{-3con1}. Now assume $a^*_2$ and $a^*_3$ are in different levels, and, without loss of generality that $a^*_2$ is in $H_{k}$ and $a^*_3$ is in $H_{j}$ with $j<k$.
Note for any $w \in ( \bigcup_{i=0}^{k-1}H_{i})\cap N[a^*_3]$ we have that $w_k'$ is nonadjacent to both $a^*_3$ and $a_2^*$. Thus if $(w_{k}')'_{l-1}\neq a_1^*$ then condition \ref{-3con1} is satisfied by $x=(w_{k}')'_{l-1}$. If no such $w$ exists this means $a_3^*$ is an isolate in the graph induced by $\bigcup_{i=0}^{k-1}H_{i}$ and $((a_3^*)_{k}')'_{l-1}= a_1^*$. Due to the anti-cloning process such an isolate could only exist if one of the following holds: 
\begin{enumerate}
    \item $k=1$ and $a_3^*$ is an isolate in the base graph
    \item $k=2$ and $j=1$ and the pre-anticlone of $a_3^*$, which must lie in level 0, dominates level 0.
\end{enumerate}
For the first of the above cases note that $(a_3^*)_2'$ is also nonadjacent to both $a_3^*$ and $a_2^*$ and $((a_3^*)_2')'_{l-1}\notin N[a_1^*]$, thus $x = ((a_3^*)_2')'_{l-1}$ satisfies condition \ref{-3con1} as desired . In the second case if there is some $c$ in level 0 such that $c \nsim a_2^*$ then again we would have $c'_{l-1}$ is adjacent to both $a_3^*$ and $a_2^*$ and is not in $N[a_1*]$. Thus $x=c'_{l-1}$ satisfies condition \ref{-3con1}. If such a $c$ does not exist then $a_2^*$ must be an anticlone of a level 1 vertex $d$ such that $d\neq a_3^*$ (as $a_3^* \sim a_2^*$) and $d$ is also an isolate in $H_0\cup H_1$. Thus both the pre-anticlones of $d$ and $a_3^*$ dominate $H_0$ and thus have identical closed neighborhoods satisfying condition \ref{-3con2}.

\medskip

\indent{\bf Case 9c:} $|A^* \cap \bigcup_{i=0}^{l-2}H_{i}|=1$ and $|A^* \cap H_{l-1}|=2$.

\smallskip

Without loss of generality let $a_1^*, a_2^* \in H_{l-1}$,
Note that since $a_1^*$ and $a_2^*$ are both in level $l-1$ we know $a_1^* \nsim a_2^*$. Thus $a_1^* \sim a_2$ and $a_2^* \sim a_1$. Therefore if either $a_1^*$ or $a_2^*$ is non-adjacent to $a_3$ it must satisfy condition \ref{-3con1}. Thus assume $a_1^*,a_2^* \nsim a_3^*$, and $(a_1^*)^*,(a_2^*)^* \sim a_3^*$. This immediately implies $(a_1^*)^*,(a_2^*)^* \nsim a_3$ and clearly $(a_1^*)^* \sim a_1$ and $(a_2^*)^*\sim a_2$. Hence, if $(a_1^*)^*\nsim (a_2^*)^*$ then $(a_1^*)^* \nsim a_2$ so $x=(a_1^*)^*$ satisfies condition \ref{-3con1}. Thus we may assume $\{(a_1^*)^*, (a_2^*)^*, a^*_3\}$ must induce a $K_3$.

    Before proceeding, we must now also eliminate the possibility that $\{(a_1^*)^*, (a_2^*)^*, a_3^*\}$ all lie in level 0. To do this simply note that $(a_1^*)^*, (a_2^*)^*, a^*_3 \nsim [(a_1^*)^*]'_1 $. Thus $a_1, a_2 \nsim [(a_1^*)^*]'_1$, but $a_3 \sim [(a_1^*)^*]'_1$ so $x=[(a_1^*)^*]'_1$ satisfies condition \ref{-3con1}. Now assume $(a_1^*)^*$ lives in level $i$, $(a_2^*)^*$ lives in level $j$, and $a_3^*$ lives in level $k$ for some $i,j,k$. We know by our above argument that $\max\{i,j,k\}\ge 1$. We then further divide the argument based on which of $i, j,$ or $k$ is largest. First consider when $k = \max\{i,j,k\}$. Recall that since $\{(a_1^*)^*, (a_2^*)^*, a^*_3\}$ must induce a clique and $k \ge 1$ we know $i,j <k$. Thus $[(a_1^*)^*]'_k \nsim a^*_3, (a_1^*)^*, (a_2^*)^*$ implying $[(a_1^*)^*]'_k \sim a_3$ but $[(a_1^*)^*]'_k \nsim a_1, a_2$. Thus $x=[(a_1^*)^*]'_k$ satisfies \ref{-3con1}. On the other hand consider when either $i$ or $j$ is $\max\{i,j,k\}$, without loss of generality assume $j =\max\{i,j,k\}$. In this case we again have  $[(a_1^*)^*]'_j \nsim a^*_3, (a_1^*)^*, (a_2^*)^*$ and thus $x=[(a_1^*)^*]'_k$ satisfies condition \ref{-3con1}.

    \medskip
    
    \indent{\bf Case 9d:} $A^* \subset H_{l-1}$.

    \smallskip
    
    Unfortunately, this sub-case will need to be broken down further. First consider when all of $(a_1^*)^*$, $(a_2^*)^*$, and $(a_3^*)^*$ are in level 0. Note that if there is a vertex $w\in V(G)$ that has exactly one of $(a_1^*)^*$, $(a_2^*)^*$, or $(a_3^*)^*$ in its closed neighborhood then $x=w$ satisfies condition \ref{-3con1}. Similarly, if there is a vertex $w\in V(G)$ that has exactly two of $(a_1^*)^*$, $(a_2^*)^*$, or $(a_3^*)^*$ in its closed neighborhood, without loss of generality $((a_1)^*)^*$ and $((a_2)^*)^*$, then $w'_1 \sim ((a_3)^*)^*$ and $w'_1 \nsim ((a_1)^*)^*, ((a_2)^*)^*$ and thus $x=w'_1$ satisfies condition \ref{-3con1}. Therefore each vertex in the base graph must have either none of $(a_1^*)^*$, $(a_2^*)^*$, and $(a_3^*)^*$ or all of $(a_1^*)^*$, $(a_2^*)^*$, and $(a_3^*)^*$ in its closed neighborhood. Thus all the vertices in $\{(a_1^*)^*, (a_2^*)^*, (a_3^*)^*\}$ have identical closed neighborhoods satisfying condition \ref{-3con2}. 
    
    Now we may assume that at least one of $(a_1^*)^*$, $(a_2^*)^*$, or $(a_3^*)^*$ is not in level 0. Recall that we may also assume that $|V(G)| \ge 3$. Additionally, note that if any one of $(a_1^*)^*$, $(a_2^*)^*$, or $(a_3^*)^*$ is non-adjacent to the other two then it satisfies condition \ref{-3con1}. In particular, this means we know that not all of $(a_1^*)^*$, $(a_2^*)^*$, and $(a_3^*)^*$ lie in the same level. 

    Now we will handle the case where exactly two of $(a_1^*)^*$, $(a_2^*)^*$, and $(a_3^*)^*$ lie in the same level. Again this must be subdivided into yet more subcases. Note that we may assume, without loss of generality, that the two vertices in the same level are $(a_1^*)^*$ and $(a_2^*)^*$. Thus we will start by considering the case where $(a_3^*)^*$ lives in level $j$ which, we assume in this case, is after the level containing $(a_1^*)^*$ and $(a_2^*)^*$. If $(a_1^*)^* \nsim (a_2^*)^*$ then we know $(a_1^*)^*, (a_2^*)^* \sim (a_3^*)^*$ as each of these three vertices must have another one of the three in its closed neighborhood, or else it would satisfy condition \ref{-3con1}. Here we simply note that $((a_1^*)^*)'_j \sim (a_2^*)^*$ and not adjacent to both $(a_1^*)^*$ and $(a_3^*)^*$. Thus $x=((a_1^*)^*)'_j$ satisfies condition \ref{-3con1}. On the other hand, if $(a_1^*)^* \sim (a_2^*)^*$ we first know that $(a_1^*)^*$ and $(a_2^*)^*$ must live in level 0 and that there is at least one more vertex in level 0. If there is a vertex $w \in H_0$ adjacent to exactly one of $(a_1^*)^*$ and $(a_2^*)^*$, say $(a_1^*)^*$, then $w'_j$ is adjacent to $(a_2^*)^*$ and non-adjacent to both $(a_1^*)^*$ and $(a_3^*)^*$. Thus $x=w'_j$ satisfies condition \ref{-3con1}.
    Otherwise, for every $w \in H_0 \{(a_1^*)^*,(a_2^*)^*\}$ we know $w$ is adjacent to both or neither of $(a_1^*)^*$ and $(a_2^*)^*$, so these two vertices have identical closed neighborhoods, satisfying condition \ref{-3con2}.   
    
    To finish the case where exactly two of $(a_1^*)^*$, $(a_2^*)^*$, and $(a_3^*)^*$ lie in the same level we must also consider when $(a_3^*)^*$ lives in level $j$, a level before the level containing $(a_1^*)^*$ and $(a_2^*)^*$. Note that here we must have $(a_1^*)^*\nsim (a_2^*)^*$. Thus $(a_3^*)^* \sim (a_1^*)^*, (a_2^*)^*$ or else one of $(a_1^*)^*$ and $x=(a_2^*)^*$ would satisfy condition \ref{-3con1}. This is the one moment where we will make full use of our assumption that $l \ge 4$. Here we will let level $i$ be the level of $(a_1^*)^*$ and $(a_2^*)^*$, so $j<i$. Note $j<i \le l-2$ and $l \ge 4$ there must be another level before level $l-1$ which is not level $i$ or level $j$. Call this level $k$. If $k>i$ then consider $((a_2^*)^*)'_k$ which we know will be adjacent to $(a_1^*)^*$ and non-adjacent to $(a_2^*)^*$ and $(a_3^*)^*$. Thus $x=((a_2^*)^*)'_k$ satisfies condition \ref{-3con1}. Now consider when $k<i$ then since $|H_0|\ge 3$ we know $(a_3^*)^*$ must have at least three non-neighbors which live in a level prior to level $i$. To see this note that if $i\neq 0$ then $(a_3^*)^*$ is non-adjacent to it's own pre-anticlone as well as the other vertices in it's level, of which there are at least two. Similarly if $i = 0$ then for each $w\in H_0$, of which there are at least three, $(a_3^*)^*$ is non-adjacent to exactly one of $\{w,w_1'\}$. Therefore, since there are at least three non-neighbors of $(a_3^*)^*$ appearing in a level prior to level $i$ we know there is at least one non-neighbor $u$ of $(a_3^*)^*$ appearing prior to level $i$ which is not itself the pre-anticlone of $(a_1^*)^*$ or $(a_2^*)^*$. Thus $u'_i \neq (a_1^*)^*, (a_2^*)^*$ so $u'_i$ is non-adjacent to $(a_1^*)^*, (a_2^*)^*$ and adjacent to $(a_3^*)^*$. Thus $x=u'_i$ satisfies condition \ref{-3con1}. 

    The final case we have to consider is when $(a_1^*)^*$, $(a_2^*)^*$, and $(a_3^*)^*$ all lie in distinct levels. Say $(a_1^*)^*$ lies in level $i$, $(a_2^*)^*$ lies in level $j$, and $(a_3^*)^*$ lies in level $k$ where, without loss of generality $i<j<k$. 
    First consider that there is at least one vertex $w\in \bigcup_{m=0}^{j-1}H_m \setminus \{((a_2^*)^*)^*\}$ non-adjacent to $(a_1^*)^*$. Then we know $w'_j$ is adjacent to $ (a_1^*)^*$ and non-adjacent to $(a_2^*)^*$. Thus $(w'_j)'_k$ is adjacent to $(a_2^*)^*$ and non-adjacent to both $(a_1^*)^*$ and $(a_3^*)^*$. Therefore $x=(w'_j)'_k$ satisfies condition \ref{-3con1}. If no such $w$ exists then $(a_1^*)^*$ must be adjacent to all but one vertex in $\bigcup_{m=0}^{j-1}H_m$. This is only possible if $i=0$ and $j=1$, as if $i \neq 0$ then $(a_1^*)^*$ is non-adjacent to all the other vertices in it's level, of which there are at least two, and if $i=0$ but $j>1$ then each of the vertices $b\in V(G)\setminus\{(a_1^*)^*\}$, of which there are at least two, we know $(a_1^*)^*$ must be adjacent to exactly one of $b, b'_1$. Furthermore, among the double pre-anticlones if $(a_1^*)^* \nsim (a_2^*)^*, (a_3^*)^*$ then $x=(a_1^*)^*$ satisfies condition \ref{-3con1}. On the other hand if $(a_1^*)^* \nsim (a_2^*)^*$, but $(a_1^*)^* \sim (a_3^*)^*$ then $x=((a_1^*)^*)_k'$ satisfies condition \ref{-3con1}. Therefore, we may now assume $(a_1^*)^* \sim (a_2^*)^*$, which would require that $(a_1^*)^* \nsim ((a_2^*)^*)^*$ Thus we know that $(a_1^*)^* \in H_0$ and is adjacent every vertex in $V(G)\setminus \{(a_1^*)^*, ((a_2^*)^* )^* \}$. If there exist some $c \in H_0\setminus \{(a_1^*)^*, ((a_2^*)^* )^* \}$ which is adjacent to $((a_2^*)^* )^*$ and $c'_k \neq (a_3^*)^*$ then $c_k'$ is adjacent to $(a_2^*)^*$ and non-adjacent to both $(a_1^*)^*$ and $(a_3^*)^*$. Thus $c'_k$ satisfies condition \ref{-3con1}. If $c_k' = (a_3^*)^*$ then $c$ itself satisfies condition \ref{-3con1}. Thus we know $((a_2^*)^* )^*$ must be an isolated vertex in $G$. Therefore, in $H_0 \cup H_1$ we know $(a_1^*)^* $ and $(a_2^*)^*$ must have identical closed neighborhoods satisfying condition \ref{-3con2}.

\end{proof}
While we do not have a complete classification for when ILAT graphs have failed zero forcing number $FZ(ILAT_l(G))= |V(ILAT_l(G))|-4$ for all $l$ and $FZ(ILAT_l(G))= |V(ILAT_l(G))|-5$ for all $l$ below is an infinite family of graphs which, for example, have $FZ(ILAT_l(G))= |V(ILAT_l(G))|-4$ for all $l$. 
\begin{fact}\label{FZF-ILAT4}
    Let $G$ be a graph with vertices $u$ and $v$ such that $N(u)= G\setminus \{v\}$ and $N(v)= G\setminus \{u\}$, then $FZ(ILAT_l(G))=|V(ILAT_l(G))|-4$ for $l\ge 1$.
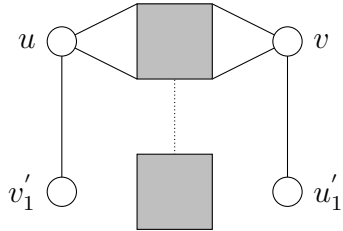
\begin{figure}[h]
    \centering
    \begin{tikzpicture}[scale=.5]
    \draw[color=black] (0,-4) -- (0,0);
    \draw[color=black] (6,-4) -- (6,0);
    \draw[densely dotted,color=black] (3,-4) -- (3,0);
    \draw[color=black] (0,0) -- (2,1);
    \draw[color=black] (0,0) -- (2,-1);
    \draw[color=black] (6,0) -- (4,1);
    \draw[color=black] (6,0) -- (4,-1);
    \fill[gray!50] (2,-1) rectangle ++(2,2);
    \draw[draw=black] (2,-1) rectangle ++(2,2);
    
    \draw (0,0) node [circle,draw,fill=white,label= west:$u$]{ };
    \draw (6,0) node [circle,draw,fill=white,label= east:$v$]{ };

    \fill[gray!50] (2,-5) rectangle ++(2,2);
    \draw[draw=black] (2,-5) rectangle ++(2,2);
    
    \draw (0,-4) node [circle,draw,fill=white,label= west:$v_1^{'}$]{ };
    \draw (6,-4) node [circle,draw,fill=white,,label= east:$u_1^{'}$]{ };
\end{tikzpicture}  
    \caption{A configuration for $FZ(ILAT_l(G))=|V(ILAT_l(G))|-4$ for $l\ge 1$.}
    \label{fig:2 dominating-ish_fig}
\end{figure}

\begin{proof}
    Let $G$ be a graph with vertices $u$ and $v$ such that $N(u)= G\setminus \{v\}$ and $N(v)= G\setminus \{u\}$. Now consider $H=ILAT_1(G)$. Note that $N_H(u_1')=\{v\}$ and $N_H(v_1')=\{u\}$. We claim that for any such graph $G$ and any $l \ge 1$ we have $V(ILAT_l(G))\setminus \{u,v,u_1',v_1'\}$ is a failed zero forcing set of $ILAT_l(G)$. Note that in $H$ every vertex in level 0 contains exactly two vertices from $\{u,v,u_1',v_1'\}$ in it's closed neighborhood. Furthermore, in level 1 all vertices, except $u_1'$ and $v_1'$ are adjacent to none of the vertices in $\{u,v,u_1',v_1'\}$. Finally $u_1'$ and $v_1'$ each have exactly two vertices from $\{u,v,u_1',v_1'\}$ in their closed neighborhood. Thus every vertex in $ILAT_l(G)$ for $l \ge 2$ will have $0, 2, $ or $4$ vertices from $\{u,v,u_1',v_1'\}$ in their closed neighborhood. Thus $V(ILAT_l(G))\setminus \{u,v,u_1',v_1'\}$ is a failed zero forcing set as desired. 

    We know this failed zero forcing set is of maximum size since by Theorem \ref{FZF-ILAT3} there can never be a maximum failed zero forcing set of size $|V(ILAT_l(G))|-3$ and by Fact \ref{FZF-ILAT2} we know such graphs will not have a failed zero forcing set of size $|V(ILAT_l(G))|-2$. Thus $FZ(ILAT_l(G))=|V(ILAT_l(G))|-4$.
\end{proof}
\end{fact}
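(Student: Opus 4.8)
The plan is to prove the two inequalities separately, with essentially all of the work going into the lower bound $FZ(ILAT_l(G)) \ge |V(ILAT_l(G))| - 4$. For this I would fix the four vertices $U = \{u, v, u_1', v_1'\}$, which live in levels $0$ and $1$, and show that $V(ILAT_l(G)) \setminus U$ is a failed zero forcing set. The engine of the argument is a single invariant: \emph{every} vertex $a$ of $ILAT_l(G)$ satisfies $|N[a] \cap U| \in \{0, 2, 4\}$. Granting this, any initially forced vertex $a \notin U$ has $|N(a) \cap U| = |N[a] \cap U|$ even, so it never has exactly one unforced neighbour while the unforced set is contained in $U$; since the process starts with the unforced set equal to $U$, no force can ever occur and $U$ stays unforced forever.

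To establish the invariant I would induct on the level of $a$. For the base case, a direct computation in $ILAT_1(G)$ handles levels $0$ and $1$. Because $u$ and $v$ each dominate all of level $0$ except one another, every level-$0$ vertex $w \notin \{u,v\}$ has $u, v \in N[w]$ but is adjacent to neither $u_1'$ nor $v_1'$ (as $AN[u] = \{v\}$ and $AN[v] = \{u\}$), giving count $2$; the vertices $u$ and $v$ themselves contribute $N[u]\cap U = \{u, v_1'\}$ and $N[v]\cap U = \{v, u_1'\}$, again count $2$; and in level $1$ the vertices $u_1', v_1'$ contribute count $2$, while every other $w_1'$ is adjacent to none of $U$, giving count $0$. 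The key point justifying the induction is that these counts are \emph{stable}: any neighbour a level-$0$ or level-$1$ vertex acquires at a later stage lies in a level $\ge 2$ and hence outside $U$, so $|N[b]\cap U|$ is independent of the particular $ILAT_k(G)$ in which it is measured.

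For the inductive step, take $a = b_m'$ in level $m \ge 2$, so that $a \notin U$. Since $U \subseteq V(ILAT_{m-1}(G))$ and $N(a) = AN[b] = V(ILAT_{m-1}(G)) \setminus N[b]$, intersecting with the fixed set $U$ gives the clean identity $N[a] \cap U = U \setminus (N[b] \cap U)$, hence $|N[a] \cap U| = 4 - |N[b] \cap U|$. By the induction hypothesis $|N[b] \cap U| \in \{0,2,4\}$, so $|N[a] \cap U| \in \{0,2,4\}$ as well. This complementation is the heart of the proof, and I expect the main obstacle to lie precisely here: one must argue carefully that $|N[b]\cap U|$ is well defined (via the stability observation above) so that the recursion $4 - |N[b]\cap U|$ makes sense, and that anticloning flips the count relative to the fixed four-element set rather than to the whole growing vertex set. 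Once this parity-preserving step is in hand, the invariant, and with it the lower bound, follows.

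Finally, for the matching upper bound $FZ(ILAT_l(G)) \le |V(ILAT_l(G))| - 4$ I would invoke the results already proved: Theorem \ref{FZF-ILAT3} rules out a (maximum) failed zero forcing set of size $|V(ILAT_l(G))| - 3$, and Fact \ref{FZF-ILAT2} rules out size $|V(ILAT_l(G))| - 2$ \emph{provided} the base graph avoids the three listed configurations — in particular it must not contain two vertices with identical closed neighbourhoods, which would otherwise push the value up to $|V(ILAT_l(G))|-2$. Checking that the base graph under consideration avoids those configurations, and combining with the lower bound, pins the value at exactly $|V(ILAT_l(G))| - 4$.
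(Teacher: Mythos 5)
Your lower bound is correct and follows the same route as the paper's proof, but is in fact more rigorous than the paper's write-up: where the paper simply asserts that the count $|N[a]\cap U|\in\{0,2,4\}$ propagates to every level $\ge 2$, you justify it by an explicit induction, using the complementation identity $N[a]\cap U = U\setminus\left(N[b]\cap U\right)$ for an anticlone $a=b'_m$ (valid precisely because $U\subseteq V(ILAT_{m-1}(G))$), together with the stability observation that vertices of levels $0$ and $1$ only ever acquire later neighbours lying outside $U$. That is exactly the argument the paper leaves implicit, and your parity conclusion (no vertex outside $U$ ever has exactly one unforced neighbour, so no force ever occurs) is sound.

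The genuine gap is in the upper bound, at the very step you flagged and deferred: ``checking that the base graph under consideration avoids those configurations'' of Fact~\ref{FZF-ILAT2}. That check cannot be completed, because the hypothesis of the statement does not exclude them. Concretely, let $G$ have vertex set $\{u,v,a,b\}$ with edges $ua,ub,va,vb,ab$. Then $N(u)=V(G)\setminus\{u,v\}$ and $N(v)=V(G)\setminus\{u,v\}$, so $G$ satisfies the hypothesis; but $N[a]=N[b]=V(G)$, so condition~1 of Fact~\ref{FZF-ILAT2} holds. Closed twins persist under anticloning, so $FZ(ILAT_l(G))\ge |V(ILAT_l(G))|-2$ for every $l\ge 1$ (indeed $a'_1$ and $b'_1$ are isolated in $ILAT_1(G)$, giving $|V(ILAT_1(G))|-1$ there), and the claimed value $|V(ILAT_l(G))|-4$ is simply false for this $G$. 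So no argument can close your deferred step without strengthening the hypothesis (e.g.\ requiring that no two vertices of $G$ have identical closed neighbourhoods and that no vertex dominates $G$). You should be aware that the paper's own proof has exactly the same defect---it cites Fact~\ref{FZF-ILAT2} to rule out $|V(ILAT_l(G))|-2$ without verifying that none of its three conditions hold---so your proposal faithfully reproduces the published argument while being more candid about where it is fragile. A second, smaller issue shared with the paper: Theorem~\ref{FZF-ILAT3} is stated only for $l\ge 4$, so citing it does not rule out $FZ(ILAT_l(G))=|V(ILAT_l(G))|-3$ when $l\in\{1,2,3\}$, even though the statement claims all $l\ge 1$.
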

As is typical for ILT graphs their behavior, unlike for ILAT graphs, is often much more dependent on the structure of the base graph. This again comes into play for failed zero forcing on ILT graphs. To see this we need to introduce a related concept of loop zero forcing \cite{loopzero}. In loop zero forcing we retain the standard zero forcing rules and add in one additional method of forcing. Namely, an unforced vertex can become (loop) forced if all of it's neighbors are forced. Thus it is simple to check that if $W$ is a failed \emph{loop} zero forcing set on some graph $G$ then in $H=ILT_l(G)$ we have $W \cup \left(\bigcup_{i=1}^l H_i\right)$ is also a failed zero forcing set in $H$ (recall $H_i$ denotes the vertices in level $i$ of $H$).

\section{Zero Forcing on ILT and ILAT Graphs}\label{zf-sec}
In this section we turn our attention to providing bounds for the zero forcing number of both ILT and ILAT graphs. Of note, for Theorem \ref{cloneupper} which concerns ILT graphs, the upper bound is dependent on the zero forcing number of the base graph. However, in the case of ILAT graphs the bounds given in Theorem \ref{anticlonezero} are independent of the base graph. As the cloning process maintains the behavior of the base graph significantly more than anti-cloning it is conceivable that all zero forcing numbers of ILAT graphs on a fixed number of vertices are contained in a much smaller interval than in the ILT case. 
\begin{thm}\label{cloneupper}
For any graph $G$ and $H = ILT_l(G)$ we have that a zero forcing set of $G$ along with all its descendants is a zero forcing set of $H$. Thus $Z(H)\le 2^l Z(G)$. \end{thm}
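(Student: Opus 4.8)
The plan is to induct on $l$, reducing everything to a single cloning step. Writing $H = ILT_l(G) = ILT_1(ILT_{l-1}(G))$ and setting $K = ILT_{l-1}(G)$, observe that the descendants of a zero forcing set $S$ inside $H$ split into two parts: the descendants of $S$ that already live in $K$ (levels $0$ through $l-1$), together with their level-$l$ clones. Hence it suffices to prove the following one-step statement and iterate: \emph{if $T$ is a zero forcing set of an arbitrary graph $K$ and $T' = \{v' : v \in T\}$ denotes the set of its clones in $ILT_1(K)$, then $T \cup T'$ is a zero forcing set of $ILT_1(K)$.} Since $|T \cup T'| = 2|T|$, iterating this $l$ times starting from a minimum zero forcing set of $G$ produces a zero forcing set of $H$ of size $2^l Z(G)$, which gives the bound.

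For the one-step statement I would first record the local structure of $ILT_1(K)$: a clone $v'$ satisfies $N(v') = N_K[v]$, so every neighbor of $v'$ lies in level $0$; meanwhile $v \sim v'$, and the clone-neighbors of a level-$0$ vertex $v$ are exactly $\{u' : u \in N_K[v]\}$. The forcing argument then mirrors a serialized forcing sequence $a_1 \to b_1, a_2 \to b_2, \dots$ of $K$ (where $a_i$ forces $b_i$) by a \emph{double step}, maintaining the invariant that after processing the $i$-th force the set of forced vertices in $ILT_1(K)$ is precisely $\{a, a' : a \in F_i\}$, where $F_i = T \cup \{b_1, \dots, b_i\}$ is the $K$-forced set after $i$ steps. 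Granting the invariant at stage $i-1$, the clone $a_i'$ is forced and, because $N(a_i') = N_K[a_i]$ contains no level-$\ge 1$ vertices, its only unforced neighbor is $b_i$, so $a_i'$ forces $b_i$. Immediately afterward the original $a_i$ has as neighbors the now fully forced set $N_K(a_i)$ in level $0$, the forced clone $a_i'$, and the clones $\{u' : u \in N_K(a_i)\}$ — of which only $b_i'$ remains unforced — so $a_i$ forces $b_i'$, restoring the invariant.

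The main obstacle is exactly the reason a naive mirror fails: one cannot simply use the level-$0$ vertex $a_i$ to reproduce the $K$-force $a_i \to b_i$, since in $ILT_1(K)$ the vertex $a_i$ is adjacent to both $b_i$ and its clone $b_i'$ and so has (at least) two unforced neighbors. Routing the level-$0$ force through the clone $a_i'$, whose neighborhood deliberately omits level $1$, is what breaks this symmetry, and the follow-up force $a_i \to b_i'$ is legal precisely because the invariant guarantees that the clones of all the \emph{other} $K$-neighbors of $a_i$ have already been forced. Verifying that this invariant is preserved at every step — in particular that one never needs to force some $b_j'$ with $j > i$ prematurely — is the heart of the argument; once it is established, the induction on $l$ and the accompanying size bookkeeping are routine.
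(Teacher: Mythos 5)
Your proposal is correct, but it takes a genuinely different route from the paper. You induct on the number of iterations $l$, reducing everything to a clean one-step lemma (if $T$ is a zero forcing set of $K$, then $T \cup T'$ forces $ILT_1(K)$), which you prove by lifting a serialized forcing sequence of $K$ via the double step ``clone forces original, then original forces clone''; your invariant bookkeeping is sound, and the key observation that $N(a_i') = N_K[a_i]$ contains no level-$1$ vertices is exactly what makes the first half-step legal, while the already-forced clones of $N_K(a_i)$ make the second half-step legal. The paper instead works directly inside the full graph $H = ILT_l(G)$: it mirrors each base-graph force $u \to v$ by an explicit two-stage choreography --- first forcing $v$ and all its descendants in levels $1$ through $l-1$ using descendants of $u$ in level $l$, then forcing the level-$l$ descendants of $v$ using descendants of $u$ in levels $0$ through $l-1$ --- with nested inductions on the clone distance $c(\cdot,v)$ to identify, for each descendant of $v$, the unique descendant of $u$ that forces it. What each approach buys: yours is shorter, more modular, and avoids the clone-distance bookkeeping entirely, since all the combinatorics are confined to a single cloning step; the paper's approach, at the cost of heavier case analysis, yields an explicit global description of the forcing pattern in $H$ (which vertex forces which, and in what order), something the authors state as a deliberate goal and which your recursive argument leaves implicit in the unrolled induction. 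Both establish the same decomposition of the descendant set (your identification of ``descendants of $S$ in $H$'' with $T \cup T'$ for $T$ the descendants in $ILT_{l-1}(G)$ is correct) and hence the same bound $Z(H) \le 2^l Z(G)$.
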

In our proof of Theorem \ref{cloneupper} we will not only prove that our desired set is a successful zero forcing set, but also precisely describe the forcing pattern which occurs. 
\begin{proof} 
    Let $G$ be any graph and let $H = ILT_l(G)$ for some $l$. Let $U$ be a zero forcing set of $G$ and let $W$ be the subset of $V(H)$ containing $U$ along with every descendant of $U$. We claim that $W$ is a zero forcing set of $H$ and thus $Z(H) \le 2^l Z(G)$. 
    
    Let $u, v \in V(G)$ such that $ u \in U$, $v \notin U$ and a valid first force in $G$ is $u$ forces $v$. We will describe how to force $v$ and all of its descendants in $H$ exactly using $u$ and all of its descendants; this process can then be iterated mirroring each force in $G$ with a series of forces in $H$ until the entirety of $H$ is forced. 

    Assume $x \in V(H)$ is a descendant of $y \in V(G)$. Since $y$ is in level $0$ recall $c(x,y)$, the clone distance between $x$ and $y$, is the number of levels containing an ancestor of $x$. The forcing pattern we describe for $v$ and its descendants is broken into two stages:
    \begin{enumerate}
        \item Force $v$ and all its descendants in levels 1 through $l-1$ with the descendants of $u$ in level $l$. 
        \item Force the descendants of $v$ in level $l$ with the descendants of $u$ in levels $0$ through $l-1$. 
    \end{enumerate}
    
    In the first stage, we force the vertices in increasing order of clone distance from $v$. This results in first forcing $v$ itself, followed by all descendants of $v$, in levels $1$ through $l-1$ only, that first have clone distance $1$, then $2$, up to $l-1$. Let $a \in V(H)$ be the descendant of $u$ in level $l$ with $c(a,u)=l$. This requires that $a$ is obtained by successive cloning in every level beginning with vertex $u$. Note that this implies $N_H(a)$ is exactly $N_G(u)$ along with all of the ancestors of $a$. Note that $a$ and all of its ancestors are descendants of $u$ and thus are in $W$. Further, $N_G(u) \setminus \{v\}$ is also entirely in $U$ and thus $W$, since otherwise $u$ would not have been able to force $v$ in $G$. Therefore, in $H$, $a$ can force $v$. 
    
    Now assume, for the sake of induction, that for some $m \ge 0$ we have successfully forced $v$ and all descendants of $v$ in levels $1$ through $l-1$ with clone distance from $v$ at most $m$. We now describe how to force the descendants of $v$, in levels 1 through $l-1$, with clone distance $m+1$. Let $x$ be such a vertex of clone distance $m+1$ from $v$ and let $v_{i_1}, v_{i_2}, \ldots v_{i_{m+1}}$ be the $m+1$ ancestors of $w$. Here, the notation is such that $v_{i_j}$ is the ancestor of $v$ in level $i_j$, and $i_1=0$ with $v_{i_1} = v_0=v$. As all of the ancestors of $x$ have clone distance at most $m$ from $v$ we may assume they are already forced. 
    
    Let $y$ be the descendant of $u$ in level $l$ with ancestors in every level \emph{except} levels $i_2, i_3, \ldots i_{m+1}$, recall these are the levels containing $x$'s ancestors. Note that this uniquely defines $y$. Furthermore, since each level forms an independent set if $y$ has an ancestor in a given level that ancestor is the only neighbor of $y$ in that level. Thus for every $a \in N(y)\setminus W$ we know $a$ must be a descendant of $v$ with all ancestors in some subset of levels $i_1, i_2, \ldots, i_m$. The maximum possible clone distance from $v$ among the vertices in $N(y)\setminus W$ is $m+1$ and thus $x$ is the unique vertex in $N(y)\setminus W$ whose clone distance from $v$ is exactly $m+1$. Thus $x$ is the only unforced neighbor of $y$ and $y$ can force $x$ as desired.
    
  Now we may assume that the first stage is complete and $v$ and all of its descendants in levels $1$ through $l-1$ are forced. Recall, in the second stage we will now force the descendants of $v$ in level $l$. Again we will do this in increasing order of clone distance from $v$, where the minimum possible clone distance is 1, witnessed by $v'_l$, the clone of $v$ in level l. Here we force $v'_l$, with $c$, the descendant of $u$ in level $l-1$ with clone distance $l-1$ from $u$. Again note that this requires $c$ to have ancestors in every possible level, 0 through $l-2$. Thus, $c$ does not have any neighbors in levels $1$ through $l-1$ other than its ancestors, which are all descendants of $u$. Thus the only descendant of $v$ that $c$ is adjacent to is $v'_l$, as desired. Furthermore, as before all the other neighbors of $c$ are descendants of a vertex in $N_G(u) \setminus \{v\}$ and thus are in $W$, allowing $c$ to force $v'_l$. 
  
  Now, for the sake of induction, assume for some fixed $m \ge 1$ that all the descendants of $v$ in level $l$ with clone distance at most $m$ have been forced. We will now show that all the descendants of $v$ in level $l$ with clone distance $m+1$ from $v$ can now be forced. Let $x$ be such a vertex with ancestors $v_{k_1}, v_{k_2}, \ldots v_{k_{m+1}}$ and let $n$ be the largest integer in $[0, l-1]$ such that $n \notin \{k_1, \ldots k_{m+1}\}$. Pick $y$ to be the descendant of $u$ in level $n$ with ancestors in every level prior to level $n$ except $k_1, \ldots, k_{m+1}$. Now note that vertices in level $l$ which are in $N(y) \setminus W$ are exactly the descendants of $v$ whose ancestors are all contained in, some subset of, levels $k_1, \ldots, k_{m+1}$. Thus, other than $x$, all the vertices in $N(y) \setminus W$ in level $l$ have clone distance at most $m$ from $v$. Therefore, we may assume that $N(y) \setminus \{x\}$ is already forced and $y$ may force $x$ as desired. 

  At this point, $W$ along with $v$ and all of it's descendants are forced. We may then repeat this argument by mirroring the next force which would have taken place in $G$. By continuing this process we will eventually force all of $H$ proving that $W$ is a successful zero forcing set of $H$. 
\end{proof}

Currently our best lower bound for $Z(ILT_l(G))$ comes from Corollary 1 in \cite{throttling} which states that the average degree is a lower bound on the zero forcing number of a graph. For instance, in the case of $ILT_l(K_2)$ this provides a lower bound of $|V(ILT_l(K_2))|^{\log_2(3/2)}$ in contrast with the upper bound from Theorem \ref{cloneupper} of $\frac{|V(ILT_l(K_2))|}{2}$. Note that in the case of $ILT_l(K_2)$ the average degree lower bound is a significant improvement on the trivial minimum degree lower bound as the minimum degree is $\log(|V(ILT_l(K_2))|)$.
In running numerical checks for zero forcing on $ILT_l(K_2)$ for small $l$ we have seen that, at least for small $l$, the upper bound of $\frac{|V(ILT_l(K_2))|}{2}$ is the true zero forcing number of $ILT_l(K_2)$, and we conjecture that this is true for all $l$.

When considering zero forcing on ILAT graphs, however, we see significantly different behavior. 

\begin{thm}\label{anticlonezero}
    For any graph $G$ and $l \ge 2$ we have 
    \begin{equation}\label{ZILAT-ineq}
        \frac{|V(ILAT_l(G))|}{2}-1\le Z(ILAT_l(G)) \le \frac{3|V(ILAT_l(G))|}{4}.
    \end{equation}
\end{thm}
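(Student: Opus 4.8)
The plan is to exploit the recursive self-similarity $ILAT_l(G)=ILAT_1(F)$, where $F=ILAT_{l-1}(G)$, and to analyse the single anticloning step that produces $H:=ILAT_l(G)$ from $F$. Write $A=V(F)$ for the vertices in levels $0$ through $l-1$ and $B$ for level $l$, the anticlones created at the last step; then $|A|=|B|=n/2$, where $n=|V(H)|$, and $B$ is an independent set all of whose edges go into $A$. The first thing I would record is the exact adjacency dictionary of this last step: for distinct $x,y\in A$ we have $x\sim y$ in $H$ iff $x\sim y$ in $F$, while $x$ is adjacent to the level-$l$ anticlone $y'_l$ iff $x\nsim y$ in $F$; moreover $x\nsim x'_l$, and no two level-$l$ vertices are adjacent. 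The numerical consequence that drives both bounds is that \emph{every} vertex $x\in A$ has degree exactly $n/2-1$ in $H$ (its $F$-neighbours together with the anticlones of its $F$-non-neighbours exhaust $V(F)\setminus\{x\}$), whereas a level-$l$ vertex $y'_l$ has the smaller degree $|AN_F(y)|=n/2-1-\deg_F(y)$.

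For the lower bound I would first note why the off-the-shelf tool is insufficient: summing the degrees above gives average degree $(n/2-1)-|E(F)|/(n/2)$, so the average-degree bound cited for the ILT case only yields $Z(H)\ge (n/2-1)-|E(F)|/(n/2)$, falling just short of $n/2-1$. To recover the exact constant I would argue through \emph{forts}: a set is a zero forcing set iff it meets every fort (every nonempty set in which each outside vertex has $0$ or at least $2$ neighbours), so it suffices to show that every vertex set of size at most $n/2-2$ is disjoint from some fort, equivalently that any fort-free set has at most $n/2+1$ vertices. Here the independent level $B$ is the natural source of forts: an $A$-vertex $x$ sees a level-$l$ vertex $y'_l$ exactly when $x\nsim_F y$, so a subset $X\subseteq B$ is a fort precisely when no $A$-vertex has exactly one $F$-non-neighbour whose anticlone lies in $X$. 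Using the recursive degree information for $F$ (its own bottom vertices again have degree $|V(F)|/2-1$), I would produce, for any candidate seed of size at most $n/2-2$, a fort inside $B$ — or inside $B$ together with a controlled handful of $A$-vertices — that the seed misses.

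For the upper bound the goal is a zero forcing set of size $3n/4=\tfrac32\cdot\tfrac n2$, i.e.\ leaving $n/4$ vertices to be forced. The naive choices must be avoided: both $A$ and $B$ are (close to) forts, as the crown-graph instance $F=\overline{K_{n/2}}$ shows, so seeding all of one side stalls immediately. Instead I would seed a balanced set and force the remaining quarter along an explicit order, using the pairing identity that for any $A$-vertex $x$ and any other $A$-vertex $z$, the vertex $z$ is adjacent to exactly one of the pair $\{x,x'_l\}$ (indeed $z\sim x$ iff $z\sim_F x$, while $z\sim x'_l$ iff $z\nsim_F x$). This identity lets a fully seeded pair act as a private forcer for an uncovered vertex, and the plan is to choose which vertex of each pair to leave uncovered so that the induced forcing order on the uncovered $n/4$ vertices never stalls.

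The main obstacle is the upper bound: unlike the clean ``descendants of a base zero forcing set'' construction available for ILT graphs in Theorem \ref{cloneupper}, anticloning turns each side into a fort, so no monotone one-sided seeding works, and the $n/4$ uncovered vertices must be chosen and ordered with care to guarantee that at every step some seeded forcer has a unique uncovered neighbour. Pinning the additive constants exactly — the $-1$ in the lower bound and the factor $3/4$ in the upper bound — is precisely where the bespoke fort analysis and the ordering argument do the real work.
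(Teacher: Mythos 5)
Your structural groundwork is correct and matches what the paper's proof also uses --- the adjacency dictionary for the final anticloning step, the fact that every vertex of $A=V(ILAT_{l-1}(G))$ has degree exactly $n/2-1$ in $H$ (where $n=|V(ILAT_l(G))|$), and the fact that level-$l$ vertices have degree $n/2-1-\deg_F(y)\ge n/4$. But both inequalities are then deferred, and the deferred steps are the entire content of the theorem. For the lower bound, recasting $Z(H)\ge n/2-1$ as ``every seed of size at most $n/2-2$ misses a fort'' is an \emph{equivalent restatement}, not an argument: the fort you promise to exhibit is never constructed, and once the seed meets both $A$ and $B$ you concede you may need ``a controlled handful of $A$-vertices'' without saying which, so nothing is proved. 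The paper needs no forts at all. It runs a first-force case analysis with exactly the degree facts you recorded: if some vertex of $A$ ever performs the first force, then all but one of its $n/2-1$ neighbours lie in the seed; otherwise the first force is performed by a level-$l$ vertex, which places at least $n/4-1$ seeded vertices inside $A$, and then one considers the \emph{first} level-$l$ vertex $v$ forced during the whole process: every level-$l$ neighbour of its forcer other than $v$ (at least $n/4-1$ vertices) must belong to the seed, since none of them can have been forced before $v$. These two contributions live in disjoint parts of the graph, and the paper combines them to conclude the stated bound. This is a short dynamic argument, and it is the step your proposal replaces with a promissory note.

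The gap in the upper bound is more serious, because your $A$/$B$ bipartition plus the pairing identity cannot by itself specify a seed, and the counting in your plan is inconsistent: there are $n/2$ pairs $\{x,x'_l\}$, so leaving ``one vertex of each pair'' uncovered leaves $n/2$ vertices uncovered, not $n/4$. The paper's construction uses the finer level structure that your two-block decomposition erases. Its unforced set is $H_{l-2}\cup (H_{l-2})'_l$, i.e.\ \emph{both} members of the pairs indexed by level $l-2$ (each of size $n/8$, total $n/4$), and the forcers are third parties rather than pair partners: for each $v\in H_{l-2}$, the vertex $(v'_{l-1})'_l$ has $v$ as its unique unforced neighbour --- this uses that level $l-2$ is an independent set, so $v'_{l-1}$ dominates $H_{l-2}\setminus\{v\}$, whence its anticlone misses $H_{l-2}\setminus\{v\}$ and all of level $l$ --- and after that round, $v'_{l-1}$ has $v'_l$ as its unique unforced neighbour, so everything is forced in two steps. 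Your ``fully seeded pair as private forcer'' idea also hides a global requirement: the pairing identity tells you which member of a pair sees an uncovered vertex, but for that member to force, \emph{all} of its other neighbours must already be forced, which is exactly the condition the paper's level-based choice is engineered to satisfy. As your own closing paragraph admits, the fort analysis and the ordering argument are ``where the real work'' lies; that work is absent, so neither inequality is established.
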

\begin{proof}
    To see the upper bound let $H = ILAT_l(G)$, let $H_{i}$ denote the set of vertices in $H$ in level $i$, and let $H_{\le i}$ denote the set of vertices in $H$ in levels $0$ through $i$. Let $(H_{l-2})'_l = \{v\in V(H): v=u'_l \text{ for some }u \in H_{l-2}\}$. We claim that $Z=V(H)\setminus (H_{l-2} \cup (H_{l-2})'_l)$ is a zero forcing set of $H$. In fact, $Z$ causes all of $V(H)$ to become forced in only two time steps. In the first time step we note that for every $v \in H_{l-2}$ we have that $(v'_{l-1})'_l$ is adjacent to a unique vertex in $Z$, namely $v$. Thus all the vertices in $H_{l-2}$ become forced in the first time step. Once this has occurred we note that for each $v \in H_{l-2}$ we have $v'_{l-1}$ is adjacent to every vertex in $H_{l-2}$ except $v$. Thus each such $v'_{l-1}$ can now force each corresponding $v'_{l}$ simultaneously. 

    For the lower bound we first note that each vertex in $H$, except those in level $l$, have degree $\frac{|V(ILAT_l(G))|}{2}-1$. Thus if $W$ is a zero forcing set of $H$ and $w \in W$ is not in level $l$ and successfully forces a vertex in the first time step then $|W|\ge \frac{|V(ILAT_l(G))|}{2}-1$. Thus we may now assume all of the forces in the first time step are done by vertices in the final level. Each vertex in the final level has degree at least $\frac{|V(ILAT_l(G))|}{4}$, and as level $l$ is itself an independent set this would mean $\left|\bigcup_{i=0}^{l-1} H_i \cap W\right| \ge \frac{|V(ILAT_l(G))|}{4}-1$. Furthermore, if $H_l \subseteq W$ then we have $|W|\ge\frac{|V(ILAT_l(G))|}{2}-1$ and are done. If not, then there is some vertex $v \in H_l \setminus W$ which is the first among the level $l$ vertices to be forced. Let $x \in \bigcup_{i=0}^{l-1} H_i$ be the vertex which forces $v$. Note that $N_{H_l}(x)\setminus\{v\} \subset W$. Thus since each vertex $x \in \bigcup_{i=0}^{l-1} H_i$ has $\deg_{\bigcup_{i=0}^{l-1} H_i}(x)\ge \frac{|V(ILAT_l(G))|}{4}$ we have, in both cases, $|W|\ge \frac{|V(ILAT_l(G))|}{2}-1$ as desired. 
 \end{proof}

\section{Graph Burning}\label{burn-sec}
Graph Burning is a deterministic time process on graphs that models the spread of information, viruses, or social contagions. This model was first introduced by Roshanbin in her thesis, and in a subsequent paper with her supervisors \cite{ogburning, elhamthesis}. In this process, we begin with a graph $G$, and note that all of its vertices are considered \emph{unburned}. At time step $t$ a vertex $v$ is chosen as a new \emph{source} and consider this vertex burned. Then, at time step $t+1$ the fire spreads, and all vertices adjacent to a burned vertex, become burned. That is, we let $B_t$ denote the set of all burned vertices at time $t$ then $B_{t+1} = N[B_t] \cup \{v_{t+1}\}$, as an abuse of notation, where $v_{t+1}$ is the new source chosen at time $t+1$. We denote the minimum number of time steps, $\ell$, required so that $B_\ell = V(G)$, the \emph{burning number} of a graph and denote it $b(G)$.

In this section, we consider the burning number of the iterated models, specifically IIM graphs with mild restrictions. Recall that the ILM and IIM models are generalizations of the ILT and ILAT models in the prior sections. Specifically, the ILM graphs are initiated with some starting graph $G=G_0$, and at each time step $t \geq 1$ built $G_t$ from $G_{t-1}$ by either cloning every vertex in $G_t$ or anticloning every vertex in $G_t$ \cite{ilm}. Furthermore, for IIM graphs at each time step we independently choose, for each previously existing vertex, whether to clone or anticlone at that time step\cite{IIM}. Thus we no longer have a single ILM or IIM graph created from $G$ after $t$ levels, but rather a family of such graphs each resulting from different cloning/anti-cloning decisions. Here we let $\mathcal{ILM}_t(G)$ denote the family of ILM graphs created from $G$ after $t$ time steps and $\mathcal{IIM}_t(G)$ denote the family of IIM graphs created from $G$ after $t$ time steps. Note $ \mathcal{ILM}_t(G) \subset \mathcal{IIM}_t(G)$.

In our proofs, we require a slightly stronger classification for graph burning. Notice, that in this process the fire spreads, and then a new source is chosen at the end of the time step. That is, the fire spreads on the next time step from when the source is chosen. We wish to make a distinction between graphs where the final vertex chosen is redundant or not. That is, had we not selected the additional vertex, would the fire spread to that vertex on this time step. An alternative way of thinking of this, would be that in the last step of the process, the final source node is superfluous. We use the notation $b^*(G) $, when indicating that this bound results in a superfluous final source.

\begin{thm}\label{thrm-burning-edit}
    Let $G$ be some graph and let $H$ be some connected graph created using the IIM process such that at least one vertex in $H$ is an anticlone. That is, $ H \in \mathcal{IIM}_l(G) \setminus \{ILT_l(G)\}$ for some $l$. Then, 
    \[b(G) \leq 4 \]
\end{thm}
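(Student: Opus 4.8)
The plan is to read the claimed bound as $b(H)\le 4$ for the burning number of $H$, and to prove it by producing a length-$4$ burning sequence. Unwinding the recursion $B_{t+1}=N[B_t]\cup\{v_{t+1}\}$ with $B_1=\{v_1\}$, a sequence $(v_1,v_2,v_3,v_4)$ burns $H$ exactly when $N_3[v_1]\cup N_2[v_2]\cup N_1[v_3]\cup\{v_4\}=V(H)$, where $N_r[x]$ denotes the closed ball of radius $r$ about $x$. Hence it suffices to find two vertices $v,w$ with $\dist(x,\{v,w\})\le 2$ for every $x\in V(H)$: taking $v_1=v$ and $v_2=w$ (with $v_3,v_4$ arbitrary) then burns $H$ in four steps, since $N_3[v]\cup N_2[w]\supseteq N_2[v]\cup N_2[w]=V(H)$. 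So the whole problem reduces to exhibiting such a distance-$2$ dominating pair.

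For the pair I would use an anticlone. Write $H_{\le i}$ for the subgraph of $H$ induced on levels $0,\dots,i$ (equivalently, the IIM graph obtained after $i$ steps, as later levels add no edges among earlier vertices). Let $t_1$ be the largest level containing an anticlone, which exists since $H\neq ILT_l(G)$; let $w$ be such an anticlone and let $v=w^*$ be its pre-anticlone, so that in $H_{\le t_1}$ we have $N(w)=AN_{H_{\le t_1-1}}[v]$. The engine of the proof is the complementarity of $v$ and $w$ on the earlier levels: every $z\in V(H_{\le t_1-1})$ with $z\neq v$ satisfies $z\sim w$ if and only if $z\nsim v$. Consequently each such $z$ is adjacent to $v$ or to $w$, so all of $H_{\le t_1-1}$ lies within distance $1$ of $\{v,w\}$, and in particular \emph{no} earlier vertex is adjacent to both $v$ and $w$.

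It then remains to place the level-$t_1$ vertices, each of which is a clone or an anticlone of some $u\in V(H_{\le t_1-1})$. A clone $z$ of $u$ has $z\sim u$, and $u$ is within distance $1$ of $\{v,w\}$, so $z$ is within distance $2$. For an anticlone $z$ of $u$ (necessarily $u\neq v$, since the unique anticlone of $v$ at this level is $w$ itself), if $u\nsim v$ then $v\in AN[u]$ forces $z\sim v$. The delicate case is $u\sim v$, where $z$ is adjacent to neither $v$ nor $w$; here I would use that $z$ is adjacent to every non-neighbour of $u$. Any such non-neighbour $a$ is automatically $\neq v$, and either $a\sim v$, giving the path $z\sim a\sim v$, or $a\nsim v$ so that $a\sim w$, giving $z\sim a\sim w$; either way $z$ lies within distance $2$ of $\{v,w\}$. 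This requires $u$ to possess a non-neighbour, i.e.\ $u$ not universal, and this is precisely where connectivity of $H$ enters: the anticlone of a universal vertex is born isolated, and since only cloning occurs above level $t_1$, it would remain in a component disjoint from $v$ and $w$, contradicting that $H$ is connected. I expect this universal-vertex subcase, and checking carefully that connectivity genuinely excludes it, to be the main obstacle.

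Finally I would discharge levels $t_1+1,\dots,l$, which by maximality of $t_1$ consist entirely of clones. Because the clone of a vertex $x$ is adjacent to $N[x]$, cloning never increases the distance to a fixed target vertex; a short induction on these levels then shows that the distance-$2$ covering by $\{v,w\}$ established in $H_{\le t_1}$ persists in all of $H$. Together with the reduction of the first paragraph, this gives the distance-$2$ dominating pair and hence $b(H)\le 4$.
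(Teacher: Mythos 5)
Your proof is correct, and it rests on the same essential device as the paper's --- burning the pre-anticlone/anticlone pair at the last anticloning level --- but the surrounding architecture is genuinely different. The paper factors $H = ILT_{l-m}(F)$, where $F$ is the IIM graph up to the last anticlone level $m$, proves the bound for $F$ in Lemma \ref{lem-IIMburn} (using connectivity of $F$ to reach the final level of $F$), and then transports the bound to $H$ via Lemma \ref{lem-iltburn}, which shows the burning number is essentially invariant under the ILT construction; this forces the bookkeeping of ``superfluous final sources'' ($b$ versus $b^*$). You instead prove the purely static statement that $\{v,w\}$ dominates $V(H)$ at distance $2$ and invoke the standard sufficient condition for $b(H)\le 4$, namely the existence of sources $x_1,x_2,x_3,x_4$ with $N_3[x_1]\cup N_2[x_2]\cup N_1[x_3]\cup\{x_4\}=V(H)$; since the clone of $x$ is adjacent to all of $N[x]$, distance-$\le 2$ domination propagates through the clone-only levels with no burning-process bookkeeping at all. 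Your treatment of the level-$t_1$ vertices is also finer than the paper's: the paper covers them in one stroke by connectivity of $F$ (every final-level vertex of $F$ has a neighbor in an earlier level, since final levels are independent sets), whereas you derive the needed adjacencies structurally from the clone/anticlone definitions and use connectivity of $H$ only to exclude the single degenerate case of an anticlone of a universal vertex, which is born isolated and remains isolated under subsequent cloning. Both arguments are sound; yours is more self-contained and elementary, while the paper's factorization yields the ILT-invariance lemma as a reusable statement of independent interest. (Note that both you and the paper read the theorem's displayed conclusion $b(G)\le 4$ as the intended $b(H)\le 4$.)
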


To prove Theorem \ref{thrm-burning-edit} we require the following two lemmas. Lemma \ref{lem-IIMburn} demonstrates the power of the single anticlone on the burning process, while Lemma \ref{lem-iltburn} demonstrates the impact of solely cloning. 

\begin{lemma}\label{lem-IIMburn}
    Let $G$ be any graph and let $H$ be some connected graph in $\mathcal{IIM}_l(G)$ which contains an anticlone in the final level $l$. Then $b^*(G) \le 4$.
\end{lemma}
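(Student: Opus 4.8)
The plan is to exhibit an explicit four-step burning sequence for $H$ in which the final source is superfluous. The engine of the argument is the single structural fact that a top-level anticlone together with its pre-anticlone dominates all of the earlier levels after just one round of spreading; once every lower-level vertex is alight, one further step will necessarily ignite the entire final level.

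Write $H_{<l} = \bigcup_{i=0}^{l-1} H_i$, so that $V(H) = H_{<l} \cup H_l$, and let $w = v'_l$ be a top-level anticlone with pre-anticlone $v \in H_{<l}$. By the definition of anticloning, $N_H(w) = V(H_{<l}) \setminus N_{H_{<l}}[v]$, so every neighbour of $w$ lies in $H_{<l}$ and $N_H[w] = \{w\} \cup (H_{<l} \setminus N_{H_{<l}}[v])$. Since $N_H[v] \supseteq N_{H_{<l}}[v]$ as well, the two closed neighbourhoods already cover the lower levels:
\[
N_H[v] \cup N_H[w] \;\supseteq\; N_{H_{<l}}[v] \cup \bigl(H_{<l} \setminus N_{H_{<l}}[v]\bigr) \cup \{w\} \;=\; H_{<l} \cup \{w\}.
\]
I would therefore take $v_1 = v$ and $v_2 = w$ as the first two sources and let $v_3, v_4$ be arbitrary. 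Tracking the burned sets gives $B_1 = \{v\}$, $B_2 = N[v] \cup \{w\}$, and hence $B_3 = N[B_2] \cup \{v_3\} \supseteq N[v] \cup N[w] \supseteq H_{<l}$; that is, every vertex below the final level is burned by time three.

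The key remaining step, and the one I expect to be the main obstacle, is to show that the spread at the fourth time step alone finishes the job, i.e. that $N[B_3] = V(H)$ and so $v_4$ is genuinely superfluous. This reduces to the claim that every vertex of $H_l$ has a neighbour in $H_{<l}$. A clone $u'_l$ is automatically adjacent to its pre-image $u$, so the only potential failure is an anticlone $u'_l$ with $N_H(u'_l) = H_{<l} \setminus N_{H_{<l}}[u] = \emptyset$ --- equivalently, one whose pre-anticlone $u$ dominates $H_{<l}$. Such a vertex would have no neighbours at all, since the top level is an independent set, and would thus be isolated. This is precisely where I would invoke the hypotheses that $H$ is connected and has at least two vertices: they forbid isolated vertices and therefore force every level-$l$ vertex to reach down into $H_{<l}$. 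Hence $N[H_{<l}] = V(H)$, giving $B_4 \supseteq N[B_3] \supseteq N[H_{<l}] = V(H)$ with the final source superfluous, which yields $b^*(H) \le 4$.
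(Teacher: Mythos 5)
Your proposal is correct and follows essentially the same route as the paper's proof: burn the pre-anticlone $v$ and then its top-level anticlone $v'_l$, observe that their closed neighbourhoods cover all of the lower levels by the third time step, and use connectivity of $H$ to conclude that the spread alone ignites the final level, making the fourth source superfluous. The only difference is cosmetic: you spell out why connectivity guarantees every level-$l$ vertex has a neighbour below (a would-be exception must be an isolated anticlone), a point the paper leaves implicit.
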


\begin{proof}

    As earlier let $H_i$ denote the vertices in $H$ added in time step $i$. Let $u \in \bigcup _{i=0}^{l-1}H_{i}$ such that $u'_l$ is an anticlone of $u$. In the first time step of our burning processes we burn $u$ followed by burning $u'_l$ at the second time step. When the fire spreads in the third time step all vertices in levels $0$ to $l-1$ will be burned, as every such vertex is in the closed neighborhood of either $u$ or $u'_l$. Since $H$ is connected, every vertex in level $l$ is adjacent to some vertex in $\bigcup _{i=0}^{l-1}H_{i}$, which are all burned by the third burning step. Thus here not only is our choice of new source in the fourth burning step superfluous the choice of source in the third burning step is also immaterial. 
\end{proof}

\begin{lemma}\label{lem-iltburn}
    Let $G$ be a graph with $b(G) = b^*(G)$, and let $H=ILT_l(G)$. Then $b(H) = b(G)$. Furthermore, if every successful burning sequence of $G$ in $b(G)$ steps has a necessary final source then $b(H) = b(G)+1$.
\end{lemma}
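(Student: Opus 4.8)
The plan is to recast graph burning through its standard ball-covering characterization and then show that cloning interacts with it in two controlled ways. Recall $b(G)\le m$ iff there are sources $x_1,\dots,x_m$ with $\bigcup_{i=1}^m \{v : \dist_G(x_i,v)\le m-i\}=V(G)$, and that the final source is superfluous exactly when this holds after dropping the radius-$0$ ball around $x_m$; thus in general $b(G)\le b^*(G)\le b(G)+1$. In this language the hypothesis $b(G)=b^*(G)$ is the first case, and ``every optimal sequence has a necessary final source'' is precisely $b^*(G)=b(G)+1$. The whole argument will rest on two metric facts about $H=ILT_l(G)$, writing $\hat z$ for the level-$0$ ancestor of a vertex $z$ (itself if $z\in V(G)$).

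First I would prove the \emph{projection} fact: for any $z\in V(H)$ and any $v\in V(G)$, $\dist_H(z,v)\ge \dist_G(\hat z,v)$. This follows by checking that every edge of $H$ projects to a pair of ancestors at $G$-distance at most $1$ — immediate by induction on levels, since a clone $v'_i$ is joined only to $N[v]$ and shares the ancestor of $v$, and each nonzero level is independent — and then projecting a shortest $H$-path vertex by vertex. Second, the \emph{descent} fact: for $x\in V(G)$ and any descendant $z$ of $w\in V(G)$, $\dist_H(x,z)\le\max(\dist_G(x,w),1)$. One shows inductively that every descendant of $w$ is adjacent in $H$ to all of $N_G[w]$, so a shortest $x$--$w$ path in $G$ reroutes to end at $z$ without extra length (and if $x=w$ the clone sits at distance $1$). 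The appearance of $\max(\cdot,1)$ is the crucial asymmetry: radii used in lifting must be at least $1$, which is exactly what the $b^*$-formulation guarantees.

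These yield the two basic inequalities. \textbf{Lifting:} if balls of radii $m-1,\dots,1$ (all $\ge 1$) around base sources cover $V(G)$, then since every vertex of $H$ is a base vertex — covered by the original $G$-cover — or a descendant of one, the descent fact shows the same sources and radii cover $V(H)$; hence $b(H)\le b^*(G)$. \textbf{Projecting:} given any $m$-step burning of $H$, replace each source by its level-$0$ ancestor; the projection fact shows the resulting base-graph balls still cover $V(G)$, so $b(G)\le b(H)$. When $b(G)=b^*(G)$ these combine immediately to give $b(H)=b(G)$, settling the first statement.

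For the second statement lifting already gives $b(H)\le b^*(G)=b(G)+1$, so the work is the matching lower bound $b(H)\ge b(G)+1$, which I expect to be the main obstacle. Suppose for contradiction $b(H)=b(G)=:k$ and fix an optimal $k$-step burning of $H$ with sources $y_1,\dots,y_k$; projecting gives a $k$-step burning of $G$ with sources $\hat y_1,\dots,\hat y_k$, and the goal is to force its final source $\hat y_k$ to be superfluous, contradicting the hypothesis. The key observation is that the last $H$-source covers only $y_k$ itself, while $\hat y_k$ has at least two descendants in $H$ (itself and its level-$1$ clone, using $l\ge 1$). Hence some descendant $z\ne y_k$ of $\hat y_k$ is covered by an earlier ball $\{v:\dist_H(y_j,v)\le k-j\}$ with $j<k$ and radius $k-j\ge 1$; the projection fact applied to $z$ then gives $\dist_G(\hat y_j,\hat y_k)\le k-j$, so $\hat y_k$ is already burned by step $k$ without being chosen. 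This exhibits an optimal burning of $G$ with superfluous final source — the desired contradiction — and completes the proof.
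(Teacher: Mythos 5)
Your proposal is correct, and it reaches the same two structural facts the paper relies on, but it packages them quite differently. The paper runs a dynamic simulation argument: a burning of $G$ is mirrored in $H=ILT_l(G)$ step by step, with each clone burning alongside its ancestor (descendants of a vertex $w$ are adjacent to all of $N_G[w]$), and the reverse direction is dispatched with the one-line observation that $N[x']\subseteq N[x]$ for a clone $x'$, so ``there is never a benefit to burning vertices beyond the base graph.'' You instead work with the static ball-covering characterization of burning and isolate two metric lemmas — the projection fact (the ancestor map is distance-nonincreasing onto level $0$) and the descent fact ($\dist_H(x,z)\le\max(\dist_G(x,w),1)$ for descendants $z$ of $w$) — which makes the role of the hypothesis transparent: the $\max(\cdot,1)$ asymmetry is exactly why only radii $\ge 1$ lift, i.e.\ why the superfluous-final-source condition is what transfers. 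The most substantive difference is in the second statement: the paper establishes $b(H)\le b(G)+1$ by lifting and then simply asserts equality, leaving the lower bound $b(H)\ge b(G)+1$ implicit, whereas you prove it explicitly by a pigeonhole argument — the fiber over $\hat y_k$ contains a vertex $z\ne y_k$ (here $l\ge1$ is used), $z$ must be covered by an earlier ball of radius $k-j\ge 1$, and projecting shows $\hat y_k$ is superfluous in the projected burning of $G$, contradicting the hypothesis. One minor point worth flagging: the projected sequence $\hat y_1,\dots,\hat y_k$ may repeat vertices, so your contradiction step needs the paper's liberal definition of burning ($B_{t+1}=N[B_t]\cup\{v_{t+1}\}$, with no requirement that sources be new), under which repeated or already-burned sources are permitted — and in any case a repeated final source is automatically superfluous, so the contradiction survives either convention. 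In short, your route is a more rigorous rendering of the same ideas, and it supplies the lower-bound argument the paper's proof only gestures at; the paper's version buys brevity and intuition at the cost of that gap.
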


\begin{proof}
    Consider in $G$ that a vertex $v$ burns vertex $u$ in time step $t$, then in $H$ we see that $v$ will burn both $u$ and every clone of $u$ in the same time step $t$. We can then easily see the burning process for the ILT graph occurs simultaneously with the burning process for the base graph. Furthermore, note that in ILT graphs if $x'$ is a clone of some vertex $x$ then $N(x') \subset N(x)$. Thus in burning ILT graphs there is never a benefit to burning vertices beyond the base graph. 

    We focus our attention on the final burning step in $G$. Suppose that $b(G) \neq b^*(G)$. This implies that, regardless of the choices of sources used to burn $G$ in $b(G)$ time steps, at the final burning step there is a single vertex $v$ which is non-adjacent to all the vertices in $B_{b(G)-1}$, the vertices burned in the penultimate time step. Now for $H=ILT_l(G)$ we use the same burning pattern as in $G$, but are required to wait one additional time step for the descendants of $v$, the final source in the burning of $G$, to burn. Thus $b(H)=b(G)+1$.

    In the case where the final source vertex in the burning of $G$ is superfluous, we know that every vertex in $G$ must be adjacent to $B_{b(G)-1}$. This remains true in $H$ since if $y$ is a descendant of a given vertex $w\in V(G)$ then $N_G(y)=N_G[w]$.
    Thus in this case $b(G) = b^*(G) = b ^*(H) = b(H)$. 
\end{proof}

We next complete the proof of Theorem~\ref{thrm-burning-edit} using our previous lemmas.

\begin{proof}
    Let $H \in \mathcal{IIM}_l(G)\setminus \{ILT_l(G)\}$ for some graph $G$. Assume $H$ is connected and let level $m$ be the final level containing an anticlone. Let $F$ denote the subgraph of $H$ induced by $\bigcup_{i=0}^m H_i$. Since all remaining levels contain only cloning, $H = ILT_{l-m}(F)$. If $l=m$, the anti-clone is already in the final level, so we are done by Lemma \ref{lem-IIMburn}.

    Since $H$ is connected, and the ILT process preserves connectivity, we know that $F$ must therefore be connected. $F$ has its last level containing an anticlone, thus by Lemma \ref{lem-IIMburn} we know that $b^*(F) \le 4$. Furthermore Lemma \ref{lem-iltburn} now immediately implies $b(H) \le 4$.
\end{proof}

\section{Conclusion and Future Work}\label{con-sec}
In this paper we explored the zero forcing process for ILT and ILAT graphs as well as graph burning for the more general IIM family of graphs. In the investigation of zero forcing we again saw that in the case of the ILAT graphs the anti-cloning procedure soon wipes away the impact of the base graph, while in ILT graphs the cloning procedure retains much of the same structure of the base graph. In particular, the fact that the failed zero forcing number of ILAT graphs with a given number of vertices is one of at most four numbers shows how little the base graph can impact failed zero forcing. However, we see quite different behavior for ILT graphs. Similarly, for zero forcing our best bounds for ILT graphs are in terms of the zero forcing number of the base graph, while for ILAT graphs they are independent. In the case of graph burning we again saw the immense power of anticloning. There we only required \emph{a single anticlone} to bound the burning number by four. 

There are still, of course, many open questions. For instance, is there an infinite family of base graphs such that their ILAT graphs witness the lower bound for failed zero forcing? While such a family is quite likely to exist it would be of interest to see what sort of structure of the base graphs is the cause of the relatively small failed zero forcing number. Furthermore, small numerical checks have supported the notion that for simple base graphs such as paths and cycles the zero forcing number is likely close to or exactly the upper bound given in Theorem \ref{cloneupper}, and thus it would be of interest to prove a matching lower bound in these cases. In the case of zero forcing on ILAT graphs it would be of interest to narrow the interval on which the zero forcing number of such graphs can live. Unlike in the case of ILT graphs, we do not suspect either bounds in Theorem \ref{anticlonezero} are likely the truth for any ILAT graphs. Finally, there remain many other related processes and parameters which model information diffusion on graphs and further exploring how these processes behave on these iterated graph models is of interest.  

\section*{Acknowledgments}
E. Meger acknowledges research support from NSERC (2025-05700) and Queen's University. C. Brice, N-D. Nguyen, and A. Raz acknowledge research support from the Queens Experience in Discrete Mathematics REU funded by the NSF (Award Number DMS 2150251).

\nocite{*}
\bibliographystyle{abbrv}
\bibliography{ZF-ILM-bib}

\end{document}